\documentclass[leqno,11pt]{amsart}

\usepackage{setspace}
\usepackage[english]{babel}
\usepackage{mathtools}
\usepackage{mathrsfs}
\usepackage{enumitem}
\usepackage{amsthm}
\usepackage{amssymb}
\usepackage{bbm}

% Theorem Styles
\newtheorem{theorem}{Theorem}[section]

\newtheorem{lemma}[theorem]{Lemma}
\newtheorem{proposition}[theorem]{Proposition}

\newtheorem{fact}[theorem]{Fact}
\newtheorem{claim}[theorem]{Claim}
\newtheorem*{thm*}{Theorem}

% Definition Styles
\theoremstyle{definition}
\newtheorem{definition}[theorem]{Definition}

\theoremstyle{remark}
\newtheorem{remark}{Remark}

\newtheorem{question}{Question}

\def\forces{\Vdash}

\def\ZFC{\mathsf{ZFC}}

\def\baire{\omega^\omega}

\def\mfb{\mathfrak b}
\def \mfd{\mathfrak{d}}
\def \mfi{\mathfrak{i}}

\def\CH {\mathsf{CH}}
\def\Q{\mathbb Q}
\def\P{\mathbb P}

\def\cc{2^{\aleph_0}}
\def\mfp{\mathfrak{p}}

\def\mfu{\mathfrak{u}}

\usepackage[margin=1.3in]{geometry}

\begin{document}

\title{Generic Selective Independent Families}

\author[Fischer]{Vera Fischer}
\address[V. ~Fischer]{Institute of Mathematics, University of Vienna, Kolingasse 14-16, 1090 Vienna, Austria}
\email{vera.fischer@univie.ac.at}

\author[Switzer]{Corey Bacal Switzer}
\address[C.~B.~Switzer]{Institute of Mathematics, University of Vienna, Kolingasse 14-16, 1090 Vienna, Austria}
\email{corey.bacal.switzer@univie.ac.at}

\thanks{\emph{Acknowledgments:} The authors would like to thank the
Austrian Science Fund (FWF) for the generous support through grant number Y1012-N35.}
\subjclass[2010]{03E17, 03E35, 03E50} %%most common subj class for me
\keywords{}

\date{}

\maketitle

\begin{abstract}
    We prove that the generic maximal independent family obtained by iteratively forcing with the Mathias forcing relative to diagonalization filters is densely maximal. Moreover, by choosing the filters with some care one can ensure the family is selective and hence forcing indestructible in a strong sense. Using this we prove that under $\mfp = \cc$ there are selective independent families and also we show how to add selective independent families of any desired size.
\end{abstract}

\section{Introduction}
Recall that a family $\mathcal I \subseteq [\omega]^\omega$ is \emph{independent} if for all finite, disjoint $\mathcal A, \mathcal B \subseteq \mathcal I$ the set $\bigcap \mathcal A \setminus \bigcup \mathcal B$ is infinite. Such a family is a \emph{maximal} independent family or a m.i.f. if it is maximal with this property. Denote by $\mathsf{FF}(\mathcal I)$ the collection of finite functions $h:\mathcal I \to 2$ and for all $h \in \mathsf{FF}(\mathcal I)$ let $\mathcal I^h := \bigcap_{A \in {\rm dom}(h)} A^h$ where $A^h = A$ if $h(A)=0$ and $A^h = \omega \setminus A$ if $h(A) = 1$. Thus $\mathcal I$ is independent if for all $h \in \mathsf{FF}(\mathcal I)$ $\mathcal I^h$ is infinite. The sets of the form $\mathcal I^h$ are called {\em Boolean combinations}. 

Maximal independent families are one of several important examples of maximal combinatorial sets of reals studied in set theory. %and its environs. 
Other examples include MAD families, MED families and ultrafilter bases. In each case there is an associated cardinal characteristic: the least size of a maximal family of that type. In the case of m.i.f.'s this cardinal is denoted $\mfi$. See \cite{BlassHB} for more information on $\mfi$ and related cardinals. When trying to prove that such a cardinal can be consistently less than the continuum one often needs to construct witnesses which satisfy a stronger maximality condition which can be preserved by iterations of appropriate forcing notions. 

In the case of independent families the associated ``strongly maximal" families are called {\em selective independent families} (defined below). These were first investigated by Shelah in his proof of the consistency of $\mfi < \mfu$ in \cite{Sh92}, and further studies can be found in e.g. \cite{DefMIF, free_seq, FMIdeals, CS}. See in particular \cite{FS23} where the authors proved that such families can be preserved by any countable support iteration of Cohen preserving, proper forcing notions for which each iterand preserves the dense maximality of the family\footnote{All undefined terms which we will use will be defined in Section 2 of this article.}. However several aspects of the combinatorics of such families remain unknown including exactly when such families exist. In particular, it is open whether it is consistent with $\ZFC$ that there are no selective independent families. In this paper we begin the investigation into such questions. Our first main theorem is the following.

\begin{theorem}[see Theorem \ref{p=ccor} below]
    $\mfp = \cc$ implies there are selective maximal independent families. \label{p=c}
\end{theorem}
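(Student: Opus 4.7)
The plan is to perform, inside the ground model, the transfinite construction that the paper's main theorem executes as a generic iteration. The key point is that Mathias forcing $M_{\mathcal{F}}$ relative to any filter $\mathcal{F}$ on $\omega$ is $\sigma$-centered: conditions $(s,A),(s,B)\in M_{\mathcal{F}}$ sharing a finite stem $s$ have the common extension $(s,A\cap B)$, and there are only countably many stems. By Bell's theorem, $\mfp=\cc$ is equivalent to $\mathrm{MA}_{\sigma\text{-centered}}(\kappa)$ for every $\kappa<\cc$, so for any filter $\mathcal{F}$ generated by $<\cc$ sets we can meet any prescribed family of $<\cc$ dense subsets of $M_{\mathcal{F}}$, obtaining a real in $V$ that behaves like an $M_{\mathcal{F}}$-generic with respect to those dense sets.

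Starting from a countable independent family $\mathcal{I}_0$, I would recursively build a $\subseteq$-increasing chain $(\mathcal{I}_\alpha)_{\alpha<\cc}$ of independent families with $|\mathcal{I}_\alpha|<\cc$, taking unions at limits and extending by one real at successor stages. Fix in advance a bookkeeping enumeration of length $\cc$ of: (i) all $X\in[\omega]^\omega$, to be handled for dense maximality, and (ii) all countable configurations of Boolean combinations and positive sets that the main theorem of the paper identifies as needing diagonalization to guarantee selectivity. At stage $\alpha$, use $\mathcal{I}_\alpha$ together with the current task to form the appropriate diagonalization filter $\mathcal{F}_\alpha$ prescribed by the main theorem; since $|\mathcal{I}_\alpha|<\cc$, the filter $\mathcal{F}_\alpha$ has a base of size $<\cc$. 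Applying $\sigma$-centered MA to $M_{\mathcal{F}_\alpha}$ against the density lemmas the main theorem supplies (those ensuring that the generic real preserves independence, realizes dense maximality for the current $X$, and extends the selectivity configuration), produce a real $A_\alpha$ and set $\mathcal{I}_{\alpha+1}=\mathcal{I}_\alpha\cup\{A_\alpha\}$.

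At the end, $\mathcal{I}:=\bigcup_{\alpha<\cc}\mathcal{I}_\alpha$ is the desired family. Independence is preserved at every stage by the genericity of $A_\alpha$; dense maximality holds by the bookkeeping on $[\omega]^\omega$; and selectivity follows because every countable configuration of type (ii) has been handled at some stage $\alpha<\cc$. The main obstacle, and really the only work, is to verify that the density lemmas established in the earlier forcing analysis of the paper transfer verbatim to this internal MA-style argument, and in particular that at each stage only $<\cc$ many dense sets of $M_{\mathcal{F}_\alpha}$ need to be met in order to drive the inductive step. Once this repackaging is checked, the theorem is an immediate corollary of the main construction applied under $\mfp=\cc$.
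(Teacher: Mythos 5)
Your proposal is correct and is essentially the paper's own proof: Theorem \ref{p=ccor} is proved by exactly this internal $\mathsf{MA}(\sigma\text{-centered})$ version of the iteration, building a continuous increasing chain $(\mathcal I_\alpha)_{\alpha<\mfc}$ and at each stage applying the forcing axiom to $\mathbb M(\mathcal F_\alpha)$ to handle a bookkept list of tasks (dense maximality for each $X$, pseudointersections for countable subsets of the density filter, and $Q$-domination of a scale, which exists since $\mfp=\mfc$ forces $\mfb=\mfd=\mfc$). The only cosmetic difference is that the paper adds countably many reals per successor stage rather than one, which your bookkeeping can absorb.
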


This result actually follows as a corollary of the main result of the paper. 
%To explain this result we recall briefly Brendle's original proof of the consistency of $\mfi < \cc$.
To each independent family $\mathcal I$, maximal or not, there is an associated {\em diagonalization filter}, $\mathcal F_\mathcal I$, the choice of which is not unique, so that forcing with the associated Mathias forcing $\mathbb M (\mathcal F_\mathcal I)$ adds a real $m$ so that $\mathcal I \cup \{m\}$ is independent but $\mathcal I \cup \{m , y\}$ is not independent for any ground model $y \in [\omega]^\omega$. It follows that a finite support iteration of such Mathias forcing notions of any length of uncountable cofinality, adding the generic real to the independent family at each step, generically adds a maximal independent family. Now we can state the main result of this paper.  

\begin{theorem}
In the finite support iteration described above, the diagonalization filters can be chosen so that the m.i.f. produced at the end is selective. More explicitly, any finite support iteration of diagonalization filters produces a densely maximal family whose density filter is a $P$-filter, and the diagonalization filters can be chosen so that the density filter is a $Q$-filter as well.
\label{mainthm}
\end{theorem}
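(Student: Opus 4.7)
The plan is to analyze the finite support iteration $\langle \mathbb{P}_\alpha, \dot{\mathbb{M}}(\dot{\mathcal{F}}_\alpha) : \alpha < \kappa\rangle$ described before the theorem, where at each stage $\dot{\mathcal{F}}_\alpha$ is forced to be a diagonalization filter for the partial family $\dot{\mathcal{I}}_\alpha = \{\dot m_\beta : \beta < \alpha\}$ assembled from the previously added Mathias reals. Since each $\mathbb{M}(\mathcal{F})$ is $\sigma$-centered via the stem decomposition, $\mathbb{P}_\kappa$ is ccc, so every real in $V[G_\kappa]$ is already in some $V[G_\alpha]$ with $\alpha < \kappa$. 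The verification splits into three claims about the final family $\mathcal{I}_\kappa$: (a) it is densely maximal, (b) its density filter $\mathcal{F}^d_{\mathcal{I}_\kappa}$ is a $P$-filter, and (c) with a careful choice of the $\dot{\mathcal{F}}_\alpha$, it is also a $Q$-filter. Importantly (a) and (b) should be automatic for any iteration of diagonalization filters, while (c) is obtained by loading specific sets into the filters via bookkeeping.

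For (a), I would use a reflection argument. Fix a potential witness $X \in [\omega]^\omega \cap V[G_\kappa]$ against dense maximality together with the relevant boolean combination $\mathcal{I}_\kappa^h$; by ccc both are already present in some $V[G_\alpha]$. If $X \cap \mathcal{I}_\alpha^h$ is finite the witness is realized already in $V[G_\alpha]$. Otherwise $X$ determines a density set for $\mathcal{I}_\alpha$ which, by the defining property of a diagonalization filter, belongs to $\mathcal{F}_\alpha$; the Mathias real $m_\alpha$ is then almost contained in $X$, and adding $m_\alpha$ to the family at stage $\alpha+1$ yields the required splitting of $X$ inside $\mathcal{I}_\kappa^h$ by a member of $\mathcal{I}_\kappa$.

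For (b), given a sequence $\langle Y_n : n<\omega\rangle$ in $\mathcal{F}^d_{\mathcal{I}_\kappa}$, ccc places it in some $V[G_\alpha]$; and density for the larger family $\mathcal{I}_\kappa$ clearly implies density for the sub-family $\mathcal{I}_\alpha$, so the sequence is absorbed into $\mathcal{F}_\alpha$ by bookkeeping, whence $m_\alpha \subseteq^* Y_n$ for every $n$. Since $m_\alpha \in \mathcal{I}_\kappa$ cannot itself be in the density filter, the pseudo-intersection actually witnessing the $P$-filter property is extracted as a density set sitting inside $m_\alpha$ in $V[G_\alpha]$; one then argues that its density is preserved by the tail of the iteration. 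For (c) the bookkeeping is refined so that for every interval partition $\pi$ arising along the iteration there is a stage $\alpha$ at which $\mathcal{F}_\alpha$ contains a selector for $\pi$; the corresponding $m_\alpha$ realizes the selector and the argument of (b) is repeated to place a derived selector into $\mathcal{F}^d_{\mathcal{I}_\kappa}$.

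The main obstacle, as I see it, is controlling how the density filter evolves through the iteration. The diagonalization filter at stage $\alpha$ only protects density sets of $\mathcal{I}_\alpha$; subsequent Mathias reals $m_\beta$ with $\beta > \alpha$ could in principle ruin the density (for $\mathcal{I}_\beta$) of sets that were density sets for $\mathcal{I}_\alpha$. Proving (b) and (c) therefore requires an inductive preservation lemma showing that once a set enters some $\mathcal{F}_\gamma$ (hence is almost contained in the subsequent Mathias real), it continues to witness density against every extended boolean combination involving later generic reals. Making this preservation work is where the care in choosing the $\dot{\mathcal{F}}_\alpha$ is spent, and it is what separates the automatic part of the theorem from the $Q$-filter refinement.
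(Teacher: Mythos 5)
Your high-level architecture (ccc reflection to an initial stage, three separate verifications, bookkeeping only for the $Q$-part) matches the paper, but two of your three arguments have genuine gaps. In (a), the step ``$X$ determines a density set for $\mathcal I_\alpha$ which, by the defining property of a diagonalization filter, belongs to $\mathcal F_\alpha$'' is false: a diagonalization filter is merely \emph{maximal} among filters all of whose members meet every Boolean combination; it does not contain every set with that property (indeed both $y$ and $\omega\setminus y$ can have it, and they cannot both lie in one filter). The paper's proof must therefore split into two cases, and the substantive one is the case you skip: if $X$ is \emph{never} placed into any $\mathcal F_\beta$, then by maximality of $\mathcal F_\alpha$ there are $Y\in\mathcal F_\alpha$ and $g$ with $X\cap Y\cap\mathcal I_\alpha^g$ finite, and since $m_\alpha\subseteq^* Y$ the set $X$ is almost disjoint from the Boolean combination $m_\alpha\cap\mathcal I_\alpha^g$ of the \emph{final} family --- which is exactly the dense-maximality witness. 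Without this case your argument does not get off the ground.

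The more serious gap is in (b) and (c): you propose to extract the pseudointersection (resp.\ semiselector) ``as a density set sitting inside $m_\alpha$'' for a single stage $\alpha$. No almost-subset of a single $m_\alpha\in\mathcal I_\kappa$ can lie in $\mathrm{fil}(\mathcal I_\kappa)$: for $h$ with $h(m_\alpha)=1$, every $\mathcal I_\kappa^{h'}$ with $h'\supseteq h$ is almost disjoint from $m_\alpha$, hence from any $B\subseteq^* m_\alpha$, so $\mathcal I_\kappa^{h'}\setminus B$ is always infinite. The device you are missing is the paper's diagonal union: take infinitely many Mathias reals $m_{\gamma+n}$ (each automatically almost inside every $A_k$, since $A_k\in\mathrm{fil}(\mathcal I_\gamma)$ forces $A_k$ into \emph{every} later diagonalization filter --- no bookkeeping needed here), choose $f$ dominating the functions $l_k$ with $m_{\gamma+n}\setminus l_k(n)\subseteq A_k$, and set $B=\bigcup_n\bigl(m_{\gamma+n}\setminus f(n)\bigr)$; then $B\subseteq^* A_k$ and $B\in\mathrm{fil}(\mathcal I_\kappa)$ because every finite $h$ omits some $m_{\gamma+n}$ and $\mathcal I^{h\cup\{\langle m_{\gamma+n},0\rangle\}}\subseteq m_{\gamma+n}\subseteq^* B$. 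The same issue infects (c), where additionally you cannot simply ``put a selector into $\mathcal F_\alpha$'': you must first exhibit a semiselector having infinite intersection with every Boolean combination (the paper builds one from the minima of an $\omega$-block of Mathias reals and proves this by a density argument) before it is eligible for membership in a diagonalization filter. Finally, the ``preservation obstacle'' you flag at the end is not actually an obstacle: once $A\in\mathrm{fil}(\mathcal I_\gamma)$, monotonicity of the density filter under enlarging the family gives $A\in\mathrm{fil}(\mathcal I_\kappa)$ for free; the real work is getting sets into the density filter of some initial segment at all.
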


While the the wording above is somewhat technical, the moral is that the obvious way of producing a m.i.f. generically actually produces one which satisfies a stronger maximality condition making its maximality forcing invariant (for appropriate forcing notions). An immediate corollary of Theorem \ref{mainthm} is the following.

\begin{theorem}[see Theorem \ref{i<c} below]
Let $\kappa \leq \lambda$ be cardinals of uncountable cofinality. It is consistent that $\lambda = \cc$ and there is a selective independent family of size $\kappa$. If moreover $\kappa$ is regular then we can arrange $\mfi = \kappa$ as well. 
\label{mainthm1}
\end{theorem}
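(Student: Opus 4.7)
The plan is to deduce Theorem \ref{mainthm1} from Theorem \ref{mainthm} via a two-stage forcing construction: first use Theorem \ref{mainthm} to produce a selective independent family of the desired size $\kappa$, then use a cardinal-preserving tail forcing to blow up the continuum to $\lambda$ while preserving selectivity. Concretely, starting from a ground model of $\GCH$, let $\mathbb P_\kappa$ be the finite support iteration of length $\kappa$ of Mathias forcings $\mathbb M(\mathcal F_\alpha)$ relative to diagonalization filters chosen as prescribed by Theorem \ref{mainthm}. Each $\mathbb M(\mathcal F_\alpha)$ is $\sigma$-centered, so $\mathbb P_\kappa$ is ccc; a standard nice-name count yields $\cc = \kappa$ in the intermediate extension $V[G_\kappa]$, and by Theorem \ref{mainthm} there is a selective m.i.f.\ $\mathcal I$ of size $\kappa$ in this extension. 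If $\kappa = \lambda$ we are done, and the regular case of $\mfi = \kappa$ is immediate since $\mfi \leq |\mathcal I| = \kappa = \cc$.

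For $\kappa < \lambda$, I would force over $V[G_\kappa]$ with the measure algebra $\mathbb B_\lambda$ on $2^\lambda$, with generic $H$. This forcing is ccc, cardinal preserving, makes $\cc = \lambda$ in $V[G_\kappa][H]$, is $\omega^\omega$-bounding, and adds no Cohen reals. The main technical obstacle of the argument is to show that $\mathcal I$ remains selective in $V[G_\kappa][H]$; the plan is to verify directly that the three ingredients in the definition---dense maximality together with the $P$- and $Q$-filter properties of the density filter $\mathcal F_\mathcal I$---all persist into the random extension, by arguments parallel to those used in the proof of Theorem \ref{mainthm} and exploiting that $\mathbb B_\lambda$ is $\omega^\omega$-bounding and does not add any unsplit subset of a ground-model Boolean combination. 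A Cohen-type tail must be avoided: in any model with $\mathcal I$ maximal the Blass-Shelah inequality $\cov(\M) \leq \mfi \leq \kappa$ must hold, whereas adding $\lambda$ Cohen reals would make $\cov(\M) = \lambda$.

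For the sharper statement $\mfi = \kappa$ when $\kappa$ is regular, $\mathcal I$ already gives $\mfi \leq \kappa$. For the reverse inequality I would use $\mfd \leq \mfi$: the Mathias generics $m_\alpha$ added by $\mathbb P_\kappa$ dominate all reals from previous stages, so by a ccc reflection argument no family of size $< \kappa$ can be dominating in $V[G_\kappa]$ and hence $\mfd = \kappa$ there; since $\mathbb B_\lambda$ is $\omega^\omega$-bounding, any dominating family of size $<\kappa$ in $V[G_\kappa][H]$ would yield one in $V[G_\kappa]$, so $\mfd \geq \kappa$ in the final model as well. Combining with $\mfi \leq \kappa$, we obtain $\mfi = \kappa$. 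The whole argument thus reduces to the preservation lemma for selectivity under random forcing, which I expect to be the core technical content of the proof.
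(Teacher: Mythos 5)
Your overall strategy---first build the selective family by the Mathias iteration, then blow up the continuum with a ``harmless'' tail forcing---is not the paper's, and unfortunately the tail you chose is fatal. The measure algebra $\mathbb B_\lambda$ adds random reals, and a random real $r\subseteq\omega$ splits \emph{every} infinite set coded in the ground model: for each infinite $A\in V[G_\kappa]$ the sets $\{x\in 2^\omega : |x\cap A|<\omega\}$ and $\{x\in 2^\omega : |A\setminus x|<\omega\}$ are null and coded in $V[G_\kappa]$, so $r$ avoids both. In particular $r$ splits every Boolean combination $\mathcal I^h$, hence $\mathcal I\cup\{r\}$ is independent in $V[G_\kappa][H]$ and $\mathcal I$ is not even maximal there, let alone selective. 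The ``preservation lemma for selectivity under random forcing'' that you defer to as the core technical content is therefore false; being $\omega^\omega$-bounding and not adding Cohen reals is nowhere near sufficient (note that every forcing in Fact \ref{fact.all} is a Sacks-like tree forcing for which a fusion argument shows that no new real can be adjoined to $\mathcal I$). A secondary gap: even in step 1, Theorem \ref{mainthmbetter} needs $\kappa^{<\kappa}=\kappa$ for the $Q$-filter bookkeeping, so your first stage does not directly produce a selective family when $\kappa$ is singular of uncountable cofinality; and your claim that the Mathias generics themselves witness $\mfd=\kappa$ is not justified for the non-maximal intermediate families.

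The paper's proof of Theorem \ref{i<c} reverses the order of the two steps precisely to avoid any preservation issue: first force $\cc=\lambda$ by whatever means, and \emph{then} run the finite support Mathias iteration of length $\kappa$ over the large-continuum model, so the family is constructed in the final model. Since $\kappa<\cc$ there are too many reals for bookkeeping to secure the $Q$-filter property, so the paper interleaves Hechler forcing at a cofinal set of stages $\{i_\xi : \xi<{\rm cf}(\kappa)\}$, each Hechler step followed by the $\omega+\omega$-block of Lemma \ref{Qdominate} which puts a $Q$-dominator of that Hechler real into the density filter. The Hechler reals form a scale, so $Q$-dominating them suffices for the $Q$-filter property, and they also give $\mfb=\mfd={\rm cf}(\kappa)$, whence $\mfi\geq\mfd=\kappa$ when $\kappa$ is regular. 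If you want to salvage a ``preserve-then-extend'' argument you would need a tail forcing that adds no real independent over $\mathcal I$ while still increasing $\cc$ to $\lambda$ (e.g.\ a large countable support Sacks product over a model of $\CH$, which only covers $\kappa=\aleph_1$); no ccc measure-theoretic or Cohen-type forcing can work.
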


Prior to the current work, the above was only known for the special case $\kappa = \aleph_1$.

\medskip
The rest of this paper is organized as follows. In the next section we recall some preliminaries we will need. Section 3 provides the proof of Theorem \ref{mainthm}. Section 4 proves some corollaries and additional results including the proof of Theorems \ref{p=c} and \ref{mainthm1}. The paper closes with some relevant questions and lines for further research. Throughout our notation is mostly standard, conforming to that of \cite{JechST}. For all undefined notions involving cardinal characteristics and set theory of the real line we refer the reader to \cite{BlassHB} or the monograph \cite{BarJu95}.

%\;
\medskip
\noindent {\bf Acknowledgements.} The authors thank Juris St\={e}prans for many helpful conversations on the content relating to this paper as well as allowing us to inlcude Proposition \ref{Juris}. They also thank Oswaldo Guzm\'{a}n for pointing out Lemmas 6.6 and 6.7 of \cite{GS94}.

\section{Preliminaries}
Let $\mathcal I$ be an independent family. We say that $\mathcal I$ is \emph{densely maximal} if for every $X \in [\omega]^\omega$ and every $h \in \mathsf{FF}(\mathcal I)$ there is an $h' \supseteq h$ in $\mathsf{FF}(\mathcal I)$ so that either $\mathcal I^{h'} \setminus X$ or $\mathcal I^{h'} \cap X$ is finite. In other words, $\mathcal I$ is densely maximal if for any $X\notin \mathcal I$ the collection of $h \in \mathsf{FF}(\mathcal I)$ witnessing that $X$ cannot be added to $\mathcal I$ while preserving maximality is dense in the partial order $(\mathsf{FF}(\mathcal I), \supseteq)$.
 
The {\em density filter} of an independent family is the collection of all $X \in[\omega]^\omega$ so that for every $h \in \mathsf{FF}(\mathcal I)$ there is an $h' \supseteq h$ in $\mathsf{FF}(\mathcal I)$ so that $\mathcal I^{h'} \setminus X$ is finite. Denote this filter by ${\rm fil}(\mathcal I)$. Densely maximal independent families are characterized by {\em the partition property}, explained below.

\begin{fact}[The Partition Property]
Let $\mathcal I$ be an independent family. The following are equivalent:
\begin{enumerate}
    \item $\mathcal I$ is densely maximal.
    \item $P(\omega) = {\rm fil}(\mathcal I) \cup \langle \omega \setminus \mathcal I^h\; | \; h \in \mathsf{FF}(\mathcal I)\rangle_{dn}$ where $\langle \mathcal X \rangle_{dn}$ denotes the downward closure of $\mathcal X \subseteq [\omega]^\omega$ under $\supseteq^*$.
\end{enumerate}\label{pp}
\end{fact}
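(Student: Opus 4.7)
The plan is to prove the two directions separately, with essentially all of the subtlety concentrated in $(2)\Rightarrow(1)$ and specifically in the choice of auxiliary set to which $(2)$ gets applied.

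For $(1)\Rightarrow(2)$, I would fix $X \in P(\omega)$ and argue directly. If $X$ is finite, then $X \subseteq^* \emptyset = \omega \setminus \mathcal I^{\emptyset}$, placing $X$ in the downward closure via the empty condition. If $X$ is infinite with $X \notin \mathrm{fil}(\mathcal I)$, unpacking the failure of the density-filter condition yields some $h^* \in \mathsf{FF}(\mathcal I)$ for which no $h' \supseteq h^*$ satisfies $\mathcal I^{h'} \setminus X$ finite. Feeding this particular $h^*$ into dense maximality for $X$, the only surviving branch of the dichotomy is that some $h' \supseteq h^*$ satisfies $\mathcal I^{h'} \cap X$ finite, which is exactly $X \subseteq^* \omega \setminus \mathcal I^{h'}$, so $X$ sits in the downward closure.

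For $(2)\Rightarrow(1)$, fix $X \in [\omega]^{\omega}$ and $h \in \mathsf{FF}(\mathcal I)$ and look for $h' \supseteq h$ with $\mathcal I^{h'} \cap X$ or $\mathcal I^{h'} \setminus X$ finite. The main obstacle is that the naive idea of applying $(2)$ directly to $X$ has a compatibility problem: if $X$ ends up in the downward closure via some witness $h^*$, then $h^*$ may disagree with $h$ on a common atom, in which case the finiteness of $X \cap \mathcal I^{h^*}$ is a vacuous consequence of $\mathcal I^{h^*} \cap \mathcal I^h = \emptyset$ and reveals nothing about refinements of $h$.

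My proposal is to sidestep this by applying $(2)$ not to $X$ but to the auxiliary set $Z := X \cup (\omega \setminus \mathcal I^h)$, which absorbs everything outside $\mathcal I^h$. If $Z \in \mathrm{fil}(\mathcal I)$, the definition applied to $h$ yields $h' \supseteq h$ with $\mathcal I^{h'} \setminus Z$ finite; since $h' \supseteq h$ forces $\mathcal I^{h'} \subseteq \mathcal I^h$, the $\omega \setminus \mathcal I^h$ summand of $Z$ washes out and this collapses to $\mathcal I^{h'} \setminus X$ finite. Otherwise $(2)$ supplies some $h^*$ with $\mathcal I^{h^*} \cap Z$ finite; the inclusion $\omega \setminus \mathcal I^h \subseteq Z$ now automatically forces $h^*$ to be compatible with $h$, because incompatibility would give $\mathcal I^{h^*} \subseteq \omega \setminus \mathcal I^h \subseteq Z$ and hence $\mathcal I^{h^*} \cap Z = \mathcal I^{h^*}$ infinite, a contradiction. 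Consequently $h' := h \cup h^*$ is a bona fide extension of $h$ in $\mathsf{FF}(\mathcal I)$, and $\mathcal I^{h'} \cap X \subseteq \mathcal I^{h^*} \cap Z$ is finite, which finishes the argument.
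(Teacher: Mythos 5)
Your proof is correct, and since the paper states Fact \ref{pp} without proof (it is quoted as a known characterization, essentially from the Fischer--Montoya line of work), there is no in-paper argument to compare against; your write-up is a complete, self-contained verification. Both directions check out: in $(1)\Rightarrow(2)$ the failure of membership in $\mathrm{fil}(\mathcal I)$ hands you exactly the $h^*$ that kills one branch of the dense-maximality dichotomy, and in $(2)\Rightarrow(1)$ your auxiliary set $Z = X \cup (\omega\setminus\mathcal I^h)$ is precisely the right device: applying $(2)$ to $X$ alone would indeed founder on the compatibility problem you identify, whereas padding by $\omega\setminus\mathcal I^h$ both makes the filter case collapse to $\mathcal I^{h'}\setminus X$ finite (since $h'\supseteq h$ gives $\mathcal I^{h'}\subseteq\mathcal I^h$) and forces any witness $h^*$ in the downward-closure case to be compatible with $h$, because an incompatible $h^*$ would have $\mathcal I^{h^*}\subseteq\omega\setminus\mathcal I^h\subseteq Z$, contradicting finiteness of $\mathcal I^{h^*}\cap Z$ via the infinitude of Boolean combinations. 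The only cosmetic quibble is your treatment of finite $X$ via $h=\emptyset$: since $\omega\setminus\mathcal I^{\emptyset}=\emptyset$ is not an element of $[\omega]^\omega$, it is cleaner to note that a finite set is almost contained in $\omega\setminus\mathcal I^{h}$ for any nonempty $h$, but this does not affect correctness.
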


Some more facts about the density filter are listed below. The following are easily verified, see \cite[Lemma 5.5]{BFS21}.
\begin{lemma}\label{lemma1}\hfil
\begin{enumerate}
\item
If $\mathcal I'$ is an independent family and $\mathcal{I}\subseteq\mathcal{I}'$ then $\mathrm{fil}(\mathcal{I})\subseteq\mathrm{fil}(\mathcal{I}')$;
\item If $\kappa$ is a regular uncountable cardinal and $\langle\mathcal{I}_\alpha\mid\alpha<\kappa\rangle$ is a continuous increasing chain of independent families then $\mathrm{fil}(\bigcup_{\alpha<\kappa}\mathcal{I}_\alpha)=\bigcup_{\alpha<\kappa}\mathrm{fil}(\mathcal{I}_\alpha)$;
\item If $\mathcal I$ is an independent family then $\mathrm{fil}(\mathcal{I})=\bigcup\{\,\mathrm{fil}(\mathcal{J})\mid \mathcal{J}\in [\mathcal{I}]^{\leq\omega}\}$.
\end{enumerate}
\end{lemma}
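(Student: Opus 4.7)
The unifying observation behind all three parts is that for any finite function $h$, the Boolean combination $\mathcal{I}^h = \bigcap_{A \in \mathrm{dom}(h)} A^{h(A)}$ depends only on $h$, not on the ambient family $\mathcal{I}$ in which $h$ is regarded as living. Thus whether some $h' \supseteq h$ witnesses $\mathcal{I}^{h'} \setminus X$ finite is really a statement about $\mathrm{dom}(h')$ and $X$, and the role of $\mathcal{I}$ is only to specify which $h$ are admissible.

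For (1), fix $X \in \mathrm{fil}(\mathcal{I})$ and $h \in \mathsf{FF}(\mathcal{I}')$. Split $h = h_0 \cup h_1$ with $\mathrm{dom}(h_0) = \mathrm{dom}(h) \cap \mathcal{I}$. Applying $X \in \mathrm{fil}(\mathcal{I})$ to $h_0$ produces $h_0' \supseteq h_0$ in $\mathsf{FF}(\mathcal{I})$ with $\mathcal{I}^{h_0'} \setminus X$ finite. Then $h' := h_0' \cup h_1 \in \mathsf{FF}(\mathcal{I}')$ extends $h$, and by the observation above $(\mathcal{I}')^{h'} \subseteq \mathcal{I}^{h_0'}$, so $(\mathcal{I}')^{h'} \setminus X$ is finite as well.

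For (3), the inclusion $\bigcup \{\mathrm{fil}(\mathcal{J}) : \mathcal{J} \in [\mathcal{I}]^{\leq \omega}\} \subseteq \mathrm{fil}(\mathcal{I})$ is immediate from (1). For the reverse, given $X \in \mathrm{fil}(\mathcal{I})$ I will build a witnessing countable $\mathcal{J}$ by a standard closure construction. Start from any countable $\mathcal{J}_0 \subseteq \mathcal{I}$; at stage $n$, observe that $\mathsf{FF}(\mathcal{J}_n)$ is countable, so for each $h \in \mathsf{FF}(\mathcal{J}_n)$ use $X \in \mathrm{fil}(\mathcal{I})$ to pick a witness $h'_h \supseteq h$ in $\mathsf{FF}(\mathcal{I})$ with $\mathcal{I}^{h'_h} \setminus X$ finite, and set $\mathcal{J}_{n+1} = \mathcal{J}_n \cup \bigcup_{h} \mathrm{dom}(h'_h)$, which remains countable. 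Then $\mathcal{J} = \bigcup_n \mathcal{J}_n$ is countable, and any $h \in \mathsf{FF}(\mathcal{J})$ lies in some $\mathsf{FF}(\mathcal{J}_n)$ and is extended by $h'_h \in \mathsf{FF}(\mathcal{J}_{n+1}) \subseteq \mathsf{FF}(\mathcal{J})$, where $\mathcal{J}^{h'_h} = \mathcal{I}^{h'_h}$ is almost contained in $X$.

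Finally, for (2), $\supseteq$ is again immediate from (1). For $\subseteq$, suppose $X \in \mathrm{fil}(\bigcup_\alpha \mathcal{I}_\alpha)$ and apply (3) to extract a countable $\mathcal{J} \subseteq \bigcup_\alpha \mathcal{I}_\alpha$ with $X \in \mathrm{fil}(\mathcal{J})$. Because $\kappa$ has uncountable cofinality and the chain is continuous, there is $\alpha < \kappa$ with $\mathcal{J} \subseteq \mathcal{I}_\alpha$; then (1) gives $X \in \mathrm{fil}(\mathcal{I}_\alpha) \subseteq \bigcup_\beta \mathrm{fil}(\mathcal{I}_\beta)$. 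No step presents a real obstacle; the only point requiring care is the bootstrapping in (3), where one must iterate $\omega$ many times so that every finite function added at stage $n$ gets its own witness at stage $n+1$, while preserving countability throughout.
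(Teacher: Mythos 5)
Your proof is correct. The paper offers no argument for this lemma (it is dismissed as ``easily verified'' with a citation to \cite[Lemma 5.5]{BFS21}), and your verification --- resting on the observation that $\mathcal I^{h}$ depends only on $h$ and not on the ambient family, the splitting $h = h_0 \cup h_1$ for part (1), and the $\omega$-stage closure argument for part (3), with (2) following from (3) plus uncountable cofinality --- is exactly the standard one the authors have in mind.
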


We will need another type of filter associated to an independent family as well, which we define now. 

\begin{definition}
Let $\mathcal I$ be an independent family. A {\em diagonalization filter} for $\mathcal I$ is any filter $\mathcal F$ on $\omega$ which is maximal with respect to the property that all $X \in \mathcal F$ have infinite intersection with every $\mathcal I^h$.
\end{definition}

Note that in general diagonalization filters will not be unique. For instance, if $\mathcal I$ is not maximal then there is some real $y \notin \mathcal I$ so that $\mathcal I \cup \{y\}$ is independent and, by the definition of independence it follows that both $y$ and $\omega\setminus y$ have infinite intersection with every Boolean combination of $\mathcal I$. As such there are diagonalization filters containing both $y$ and $\omega\setminus y$ (which obviously therefore cannot be the same). However, it is an easy but key observation that {\em any} $y$ which has infinite intersection with every Boolean combination of a given independent family $\mathcal I$ must have infinite intersection with $\mathcal I^h \cap Z$ for every element $Z \in {\rm fil}(\mathcal I)$ and every $h \in \mathsf{FF}(\mathcal I)$. Hence if $\mathcal F$ is a diagonalization filter then ${\rm fil}(\mathcal I) \subseteq \mathcal F$. In the case of dense maximality we have the converse. 

\begin{lemma}[Essentially Fischer-Montoya, see \cite{FMIdeals}]
Let $\mathcal I$ be independent. The following are equivalent.

\begin{enumerate}
    \item $\mathcal I$ is densely maximal.
    \item The diagonalization filter is unique and equals the density filter.
    \end{enumerate}  \label{mainlemma}
\end{lemma}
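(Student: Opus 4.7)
My plan is to use the partition property of Fact \ref{pp} as the bridge between dense maximality on the one hand and the equality of diagonalization and density filters on the other. The forward implication compares an arbitrary diagonalization filter to ${\rm fil}(\mathcal I)$ via the partition dichotomy, and the converse is proved contrapositively by inflating ${\rm fil}(\mathcal I)$ to a strictly larger diagonalization filter whenever dense maximality fails.

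For (1) $\Rightarrow$ (2), assume $\mathcal I$ is densely maximal. As observed in the excerpt, every diagonalization filter $\mathcal F$ contains ${\rm fil}(\mathcal I)$. For the reverse inclusion I would take $X \in \mathcal F$ and apply the partition property: either $X \in {\rm fil}(\mathcal I)$, or there is $h \in \mathsf{FF}(\mathcal I)$ with $X \subseteq^* \omega \setminus \mathcal I^h$, i.e.\ $|X \cap \mathcal I^h| < \omega$. The second alternative is excluded by the defining property of a diagonalization filter, so $X \in {\rm fil}(\mathcal I)$. The same dichotomy shows that ${\rm fil}(\mathcal I)$ is itself maximal with the diagonalization property (nothing can be added to it without producing a set that almost-misses some $\mathcal I^h$), and it is already a filter whose elements hit every Boolean combination infinitely, so it is the unique diagonalization filter.

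For (2) $\Rightarrow$ (1), I argue by contrapositive using the other direction of Fact \ref{pp}. If $\mathcal I$ is not densely maximal, there is $X \in [\omega]^\omega$ with $X \notin {\rm fil}(\mathcal I)$ and $X \notin \langle \omega\setminus \mathcal I^h \mid h \in \mathsf{FF}(\mathcal I)\rangle_{dn}$, the latter meaning that $X \cap \mathcal I^h$ is infinite for every $h$. I then verify that the filter $\mathcal G$ generated by ${\rm fil}(\mathcal I) \cup \{X\}$ retains the property that each of its elements meets every Boolean combination infinitely: given $Y \in {\rm fil}(\mathcal I)$ and $h \in \mathsf{FF}(\mathcal I)$, pick $h' \supseteq h$ with $\mathcal I^{h'} \subseteq^* Y$; then $X \cap \mathcal I^{h'}$ is infinite and almost contained in $X \cap Y \cap \mathcal I^h$, which therefore is infinite too. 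By Zorn's lemma, extend $\mathcal G$ to a diagonalization filter $\mathcal F$; since $X \in \mathcal F \setminus {\rm fil}(\mathcal I)$, condition (2) is violated. The only place any real work happens is this last verification that adjoining $X$ preserves the diagonalization property, and it is essentially forced on us by the definition of ${\rm fil}(\mathcal I)$ in terms of refining to subcombinations.
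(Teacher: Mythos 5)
Your proposal is correct and takes essentially the same route as the paper: both directions turn on the partition property of Fact \ref{pp} together with a Zorn's lemma extension to a diagonalization filter, and your $(2)\Rightarrow(1)$ is just the paper's direct argument recast as a contrapositive (with the harmless extra step of generating the filter from ${\rm fil}(\mathcal I)\cup\{X\}$ rather than from $X$ alone). The only substantive difference is that the paper cites \cite{FMIdeals} for $(1)\Rightarrow(2)$ while you supply the short dichotomy argument yourself, which is a fine self-contained alternative.
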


\begin{proof}
Item (1) implies item (2) is Corollary 36 of \cite{FMIdeals}. Let us show that (2) implies (1). Thus suppose that ${\rm fil}(\mathcal I)$ is a diagonalization filter. Note that it must be the unique one since any other diagonalization filter extends it but, being a diagonalization filter it is maximal. We will show that $\mathcal I$ is densely maximal. By the partition property, Fact \ref{pp}, it suffices to show that $$P(\omega) = {\rm fil}(\mathcal I) \cup \langle \omega \setminus \mathcal I^h\; | \; h \in \mathsf{FF}(\mathcal I)\rangle_{dn}$$ 

Suppose that $X \notin {\rm fil}(\mathcal I)$. Since ${\rm fil}(\mathcal I)$ is the unique diagonalization filter for $\mathcal I$, it follows that $X$ is not in any diagonalization filter. This means in particular that it does not have infinite intersection with every Boolean combination of $\mathcal I$ (otherwise we could apply Zorn's Lemma to the filter generated by $X$ to get a diagonalization filter which contains $X$). Therefore there is an $h \in \mathsf{FF}(\mathcal I)$ so that $X$ is almost disjoint from $\mathcal I^h$. But then $X$ is almost included in $\omega \setminus \mathcal I^h$ as needed.
\end{proof}

We will need to force with the Mathias forcing of diagonalization filters so we recall this now.

\begin{definition}
    Let $\mathcal I$ be an independent family and let $\mathcal F$ be a diagonalization filter for $\mathcal I$. Denote by $\mathbb M(\mathcal{F})$ the Mathias forcing relativized to $\mathcal F$. A condition for this forcing notion is a pair $(s, A)$ so that 
    \begin{enumerate}
        \item $s \in 2^{<\omega}$ is the characteristic function of a finite set of natural numbers.
        \item $A \in \mathcal F$
        \item ${\rm min}(A) \geq {\rm dom}(s)$.
    \end{enumerate}
If $(s, A)$ and $(t, B)$ are conditions in $\mathbb M(\mathcal F)$ then we let $(s, A) \leq (t, B)$ just in case:
    \begin{enumerate}
        \item $s \supseteq t$
        \item $A \subseteq B$
        \item If $n \in {\rm dom}(s) \setminus {\rm dom}(t)$ and $s(n) = 1$ then $n \in B$. 
    \end{enumerate}
   
\end{definition}

 If $\mathcal F$ is clear from context, or unimportant we often write $\mathbb M(\mathcal I)$ to emphasize the independent family. If $G \subseteq \mathbb M(\mathcal I)$ is generic then the real $m_G := \{n\; | \; \exists (s, A) \in G \, s(n) = 1\}$ is denoted {\em the Mathias real} for $\mathcal I$ (or $\mathcal F$ depending on the context). It is readily checked that $m$ is an infinite set of natural numbers which {\em diagonalizes} $\mathcal F$ i.e. $m \subseteq^* A$ for each $A \in \mathcal F$ where $X \subseteq^* Y$ means that $X \setminus Y$ is finite. Also $V[G] = V[m_G]$. Moreover we have that $\mathcal I \cup \{m_G\}$ is independent but for any ground model $y \in [\omega]^\omega \cap V$ we have $\mathcal I \cup \{m_G, y\}$ is not independent, see \cite{FMIdeals}. It follows that an iteration with finite support of Mathias forcing relativized to the increasing independent families obtained by adjoining the generic Mathias reals (of length an uncountably cofinal ordinal) will result in a model with a m.i.f. of length the iteration. Let us call such a m.i.f. a \emph{generic m.i.f.}. That such a filter exists, and produces a maximal independent family was first observed by Brendle, see \cite{Hal17}. Indeed by performing such an iteration of length $\aleph_1 < \cc$ Brendle obtained the first model of $\mfi < \cc$. 

As stated in the introduction, the main goal of this paper is to improve this result by showing that such an iteration produces a {\em selective} maximal independent family. Selectivity is a further strengthening of dense maximality. We recall some more definitions.

\begin{definition}
Let $\mathcal F$ be a family of subsets of $\omega$. We say that:
\begin{enumerate}
\item
$\mathcal F$ is a $P${\em -set} if every countable family $\{A_n \; | \; n < \omega\} \subseteq \mathcal F$ has a psuedointersection $B \in \mathcal F$, i.e. $B \subseteq^* A_n$ for all $n < \omega$,
\item
$\mathcal F$ is a $Q$-{\em set} if given every partition of $\omega$ into finite sets $\{I_n \; |\; n < \omega\}$ there is a {\em semiselector} $A \in \mathcal F$ i.e. $|A \cap I_n| \leq 1$ for all $n < \omega$,
\item
$\mathcal F$ is {\em Ramsey} if it is both a $P$-set and a $Q$-set.
\end{enumerate}
If $\mathcal F$ is a filter and a $P$-set (respectively a $Q$-set, Ramsey set) we call $\mathcal F$ a $P$-filter (respectively a $Q$-filter, Ramsey filter).
\end{definition}

We can now give the definition of a selective maximal independent family.

\begin{definition}
An independent family $\mathcal{I}$ is \emph{selective} if it is densely maximal and $\mathrm{fil}(\mathcal{I})$ is Ramsey.
\end{definition}

Selective independent families were first introduced by Shelah in \cite{Sh92}, where it is shown in the course of his proof of the consistency of $\mfi < \mfu$ that under $\CH$ there is a selective independent family. Since then the following results have been shown concerning the forcing indestructibility of selective independent families. 

\begin{fact}\label{fact.all}
Let $\mathcal I$ be a selective independent family. Then $\mathcal I$ is remains selective (and hence maximal) after forcing with a countable support product of Sacks forcing  (Shelah, see \cite[Theorem 4.6]{DefMIF} or~\cite[Corollary 37]{FMIdeals}). Moreover, $\mathcal{I}$ remains selective independent after forcing with the countable support iteration of any of the following:
\begin{enumerate}
\item Sacks forcing (Shelah, see \cite[Theorem 4.6]{DefMIF} or \cite[Corollary 37]{FMIdeals});
%\item
%countable support products of Sacks forcing (Shelah, see \cite[Theorem %4.6]{DefMIF} or~\cite[Corollary 37]{FM19});
\item forcing notions of the form $\mathbb Q_\mathcal I$ from Shelah's~\cite{Sh92}; 
\item Miller partition forcing (see~\cite{CFGS21});
\item $h$-Perfect Tree Forcing Notions for different functions $h:\omega \to \omega$ with $1 < h(n) < \omega$ for all $N < \omega$ (see~\cite{CS});
\item Coding with perfect trees (see \cite{BFS21});
\item Miller lite forcing, (see \cite[Theorem 4.1]{FS23});
\item any mix of the above (a consequence of \cite[Theorem 3.8]{FS23}).
\end{enumerate}
\end{fact}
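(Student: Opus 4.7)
The plan is to subsume items (1)--(6) under the general preservation theorem of \cite{FS23} cited as item (7). That theorem asserts: if each iterand of a countable support iteration is a Cohen-preserving proper forcing that preserves dense maximality of a fixed selective independent family $\mathcal I$, then the whole iteration preserves the selectivity of $\mathcal I$. Accordingly, for each forcing $\mathbb P$ in the list, the task reduces to verifying three things: (a) $\mathbb P$ is proper, (b) $\mathbb P$ preserves Cohen reals, and (c) $\mathbb P$ preserves dense maximality of $\mathcal I$. For the product of Sacks forcing case (the initial unindexed claim), a direct analogue in the product setting must be invoked.

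For (a) and (b), all of the forcings listed are tree forcings carrying an Axiom-A-type fusion scheme, so properness is standard, and Cohen preservation follows either from $\omega^\omega$-bounding (Sacks, $h$-perfect) or from the explicit preservation arguments available for Miller-like forcings. These verifications are already carried out in the primary references; the role of \cite{FS23} is to package them so the preservation transfers through countable support iteration and arbitrary mixing thereof, which gives item (7) at once.

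The real content is (c). By Lemma \ref{mainlemma} combined with the partition property (Fact \ref{pp}), $\mathcal I$ remains densely maximal in the extension precisely when every new $X \subseteq \omega$ is either almost disjoint from some Boolean combination $\mathcal I^h$, or else almost contains $\mathcal I^{h'}$ for cofinally many $h' \in \mathsf{FF}(\mathcal I)$. Given a name $\dot X$ and a condition, one performs a fusion tailored to the particular tree forcing, producing a stronger condition along which $\dot X$ is decided level by level. Then one uses the $P$-filter property of $\mathrm{fil}(\mathcal I)$ to diagonalize the countably many Boolean combinations that appear, and the $Q$-filter property to extract a selector along which the decided values stabilize into one of the two alternatives of the partition property. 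This shows $\mathrm{fil}(\mathcal I)$ still witnesses dense maximality in $V^{\mathbb P}$, and a parallel but easier fusion shows it remains Ramsey, completing the preservation of selectivity.

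The main obstacle is that the fusion argument in (c) must be redone with the splitting structure particular to each forcing: for $h$-perfect trees one must align the $h$-splitting levels with the partition witnessing the $Q$-filter property; for Miller partition and Miller lite one must handle infinite splittings by refining through a $\mathrm{fil}(\mathcal I)$-set at each splitting node; for coding with perfect trees one has to check that the coding commitment does not conflict with the diagonalization step. The product-of-Sacks case requires additionally that the fusion respects the product support, which is possible because Sacks splitting is finitely branching and $\omega^\omega$-bounding. Once these forcing-specific verifications are in hand, the uniform conclusion in item (7), and hence the whole list, follows from the general preservation framework of \cite{FS23}.
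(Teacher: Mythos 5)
This statement is a survey of known results: the paper gives no proof of it and simply cites the sources for each item, so there is no internal argument to compare yours against. Your architecture --- route everything through the general preservation theorem of \cite{FS23} and verify, for each iterand, properness, Cohen preservation, and single-step preservation of dense maximality --- is the natural one, and it is exactly how the paper itself accounts for item (7). But as written your text is a plan rather than a proof: the entire mathematical content of the Fact lives in the forcing-specific fusion arguments that you explicitly defer (``once these forcing-specific verifications are in hand''). Those arguments occupy the bulk of the cited papers, and your sketch carries out none of them; the product-of-Sacks case in particular does not literally fall under the iteration theorem of \cite{FS23} and needs its own treatment, which you acknowledge but do not supply.

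There is also one concrete error: Cohen preservation does not ``follow from $\omega^\omega$-bounding.'' Random forcing is $\omega^\omega$-bounding yet makes the ground model reals meager, so it destroys Cohen reals; for Sacks and the $h$-perfect tree forcings, Cohen preservation is established by a separate fusion argument in the cited references, not as a corollary of bounding. A smaller slip: in your restatement of the partition property, the second alternative should be that $X$ almost contains $\mathcal I^{h'}$ for a \emph{dense} set of $h'$ in $(\mathsf{FF}(\mathcal I), \supseteq)$, i.e.\ that $X \in \mathrm{fil}(\mathcal I)$, not merely for cofinally many $h'$. Neither issue invalidates the overall strategy, but since the strategy is all you provide, the Fact remains unproved by your text and must still rest on the citations, which is precisely the status it has in the paper.
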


Obviously all the forcing constructions described above produce a model where the selective independent family is of size $\aleph_1$. As stated in the introduction, the purpose of this paper in part is to show that consistently there are selective families of other sizes.

We finish these preliminaries by remarking that there are in fact maximal, non-densely maximal independent families. This is due to Juris Stepr\=ans and is included with his kind permission.

\begin{proposition}[Stepr\=ans]
    For any cardinal $\kappa$ for which there is a maximal independent family there is a maximal, non densely maximal independent family of size $\kappa$. In particular there is always one of size $\mfi$. \label{Juris}
\end{proposition}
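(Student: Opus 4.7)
The plan is to perform a small surgery on the given MIF $\mathcal{I}$ which breaks dense maximality at a single Boolean combination without touching size or overall maximality. If $\mathcal{I}$ is not already densely maximal we simply put $\mathcal{J}:=\mathcal{I}$, so assume $\mathcal{I}$ is densely maximal. Fix any $A\in\mathcal{I}$ and write $\bar A:=\omega\setminus A$. A direct application of dense maximality at $h_0=\{(A,0)\}$ and at $h_0=\{(A,1)\}$ shows that the restricted families
\[
\mathcal{I}_A:=\{B\cap A:B\in\mathcal{I}\setminus\{A\}\}\quad\text{and}\quad\mathcal{I}_{\bar A}:=\{B\cap\bar A:B\in\mathcal{I}\setminus\{A\}\}
\]
are MIFs on $A$ and on $\bar A$ respectively; independence of $\mathcal{I}$ also guarantees that the maps $B\mapsto B\cap A$ and $B\mapsto B\cap\bar A$ are injections, so both restricted families have cardinality $\kappa$.

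Next, pick any $Z_0\in\mathcal{I}_A$, let $\mathcal{K}:=\mathcal{I}_A\setminus\{Z_0\}$ (an independent non-maximal family on $A$ of size $\kappa$), and fix a bijection $\phi:\mathcal{K}\to\mathcal{I}_{\bar A}$. Put
\[
\mathcal{J}:=\{A\}\cup\{Y\cup\phi(Y):Y\in\mathcal{K}\},
\]
so $|\mathcal{J}|=\kappa$. The key structural observation is that $A\in\mathcal{J}$ decouples the two sides: for every $h\in\mathsf{FF}(\mathcal{J})$ the intersection $\mathcal{J}^h\cap A$ is a Boolean combination of $\mathcal{K}$ (and is empty whenever $h(A)=1$), while $\mathcal{J}^h\cap\bar A$ is a Boolean combination of $\mathcal{I}_{\bar A}$ (and is empty whenever $h(A)=0$). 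Independence of $\mathcal{J}$ then follows from the independence of $\mathcal{K}$ on $A$ and of $\mathcal{I}_{\bar A}$ on $\bar A$.

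The main obstacle is the maximality check. Suppose $X\in[\omega]^\omega\setminus\mathcal{J}$ is independent from $\mathcal{J}$. Running $h$ through the conditions of the form $\{(A,1)\}\cup h''$ with $h''\in\mathsf{FF}(\mathcal{J}\setminus\{A\})$ shows that $X\cap\bar A$ is independent from $\mathcal{I}_{\bar A}$ on $\bar A$; by maximality of $\mathcal{I}_{\bar A}$, this forces $X\cap\bar A\in\mathcal{I}_{\bar A}$, i.e.\ $X\cap\bar A=\phi(Y_0)$ for a unique $Y_0\in\mathcal{K}$ (the alternative cases in which $X\cap\bar A$ is finite or cofinite in $\bar A$ are killed already by $h=\{(A,1)\}$). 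But then the condition
\[
h=\{(A,1),\,(Y_0\cup\phi(Y_0),1)\}
\]
gives $\mathcal{J}^h=\bar A\setminus\phi(Y_0)$, so $X\cap\mathcal{J}^h=\emptyset$, contradicting independence unless $X\cap A=Y_0$; in that last case $X=Y_0\cup\phi(Y_0)\in\mathcal{J}$, contrary to our assumption.

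Finally, $\mathcal{J}$ witnesses failure of dense maximality at $(X,h_0):=(Z_0,\{(A,0)\})$. Indeed the restriction of $\mathcal{J}$ to $A$ is exactly $\mathcal{K}=\mathcal{I}_A\setminus\{Z_0\}$, so for every $h\supseteq h_0$ in $\mathsf{FF}(\mathcal{J})$ the combination $\mathcal{J}^h$ equals $\mathcal{K}^{h'}$ for some $h'\in\mathsf{FF}(\mathcal{K})$, and since $\mathcal{I}_A=\mathcal{K}\cup\{Z_0\}$ is independent both $Z_0\cap\mathcal{K}^{h'}$ and $\mathcal{K}^{h'}\setminus Z_0$ remain infinite. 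This completes the plan.
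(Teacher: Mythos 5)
Your proof is correct, and the object you construct is structurally the same as the paper's: a set splitting $\omega$ into two halves, a maximal independent family on one half glued via a bijection to a non-maximal independent family on the other, with the splitting set itself added to the family; the witness to the failure of dense maximality is the set that can still be adjoined on the non-maximal side, and your maximality check (reduce $X$ to its trace on the maximal side, conclude that trace is some $\phi(Y_0)$, then kill $X$ with the combination $\bar A\setminus\phi(Y_0)$) mirrors the paper's case analysis exactly. Where you genuinely differ is in how the two halves are produced. The paper translates the given maximal independent family onto an arbitrary infinite co-infinite $Z$ and pairs it with \emph{any} non-maximal independent family of size $\kappa$ on $\omega\setminus Z$, which exists because there is always an independent family of size continuum. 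You instead manufacture both halves from the given family itself by splitting along one of its members $A$; this obliges you first to dispose of the case where $\mathcal I$ is not densely maximal and then to prove the nontrivial (but correct) fact that a densely maximal family restricts to maximal independent families on both $A$ and $\bar A$, after which deleting one element $Z_0$ from the $A$-side destroys maximality there while $Z_0$ survives as the witness. Your route is self-contained (no appeal to translation or to the existence of a size-continuum independent family) at the cost of this dense-maximality detour, which is essentially the Goldstern--Shelah observation the paper only mentions after the proof. One small blemish: in the maximality argument, once $X\cap\bar A=\phi(Y_0)$ the combination $\mathcal J^{h}=\bar A\setminus\phi(Y_0)$ satisfies $X\cap\mathcal J^{h}=\emptyset$ unconditionally, because $\mathcal J^{h}\subseteq\bar A$; the clause ``unless $X\cap A=Y_0$'' is superfluous, as the contradiction with the independence of $\mathcal J\cup\{X\}$ is immediate.
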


\begin{proof}
Fix a cardinal $\kappa$ for which there is a maximal independent family. Let $Z \subseteq \omega$ be an infinite, co-infinite set. By translation there is a maximal independent family $\mathcal I = \{A_\alpha \; | \; \alpha < \kappa \}$ on $Z$ (so all the $A_\alpha$'s are subsets of $Z$). Let $\mathcal J = \{B_\alpha \; | \; \alpha < \kappa\}$ be an independent, but not maximal independent family on $\omega \setminus Z$. Note that there is always an independent family of size continuum (\cite[Proposition 8.9]{BlassHB}) so in particular there is one of size $\kappa$. Finally let $\mathcal K = \{A_\alpha \cup B_\alpha \; | \; \alpha < \kappa\} \cup \{Z\}$. It is routine to check that this is independent. Moreover if $X \notin \mathcal K$ then either $X \cap Z$ is almost disjoint, in which case $\mathcal K \cup \{X\}$ is not independent or $X \cap Z = A_\alpha$ for some $\alpha$ in which case $\mathcal K \cup \{X\}$ is not independent or else there is a Boolean combination $h$ on $\mathcal I$ witnessing that $X \cap Z$ cannot be added to $\mathcal I$ and hence $\mathcal K \cup \{X\}$ is not independent. Therefore $\mathcal K$ is maximal.

Finally let $Y \subseteq \omega \setminus Z$ be a set so that $Y \notin \mathcal J$ but $\mathcal J \cup \{Y\}$ is independent. Observe that as a result no Boolean combination $h \in \mathsf{FF}(\mathcal K)$ extending $\langle Z, 1\rangle$ will be such that $\mathcal K^{h} \setminus Y$ or $\mathcal K^{h} \cap Y$ is finite. Thus $\mathcal K$ is not densely maximal.
\end{proof}

This counterexample should be contrasted with the combined content of Lemmas 6.6 and 6.7 of \cite{GS94} where it is shown that every maximal independent family is densely maximal below some Boolean combination i.e. for each m.i.f. $\mathcal I$ there is an $h \in \mathsf{FF}(\mathcal I)$ so that the set $\{A \cap \mathcal I^h \; | \; A \in \mathcal I \setminus {\rm dom}(h)\}$ is densely maximal as a family on $\mathcal I^h$. Note that as a corollary of this it follows that in $\ZFC$ there are densely maximal independent families and indeed they exist in every cardinality for which there is a maximal independent family.  

\section{The Selectivity of the Generic Maximal Independent Family}

Next we prove Theorem \ref{mainthm}. Let us fix some notation for the rest of this section. Let $\kappa$ be an ordinal of uncountable cofinality. Let $\mathcal I_0$ be some fixed independent family and $\mathcal F_0$ be a diagonalization filter. Now inductively let $\langle \P_\alpha, \dot{\Q}_\alpha\; | \; \alpha < \kappa\rangle$ be a finite support iteration and $\dot{\mathcal I}_\alpha$ and $\dot{\mathcal F}_\alpha$ be $\P_\alpha$-names defined as follows. 
\begin{enumerate}
    \item $\P_0$ is the trivial forcing, $\dot{\Q}_0$ is the trivial name for $\mathbb M(\mathcal F_0)$.
    \item $\P_\alpha$ forces that $\dot{\mathcal I}_\alpha$ is an independent family with a diagonalization filter $\dot{\mathcal F}_\alpha$.
    \item $\P_{\alpha + 1} = \P_\alpha * \mathbb M (\dot{\mathcal F}_\alpha)$.
    \item $\P_{\alpha + 1}$ forces that $\dot{\mathcal I}_{\alpha + 1} = \dot{\mathcal I}_\alpha \cup \{\dot{m}_\alpha\}$ where $\dot{m}_\alpha$ is the name for the $\mathbb M (\dot{\mathcal F}_\alpha)$-generic real. 
    \item If $\beta$ is a limit ordinal then $\P_\beta$ forces that $\dot{\mathcal I}_\beta = \bigcup_{\gamma < \beta} \dot{\mathcal I_\gamma}$.
\end{enumerate}
Let $\dot{\mathcal I}_\kappa$ be the $\P_\kappa$-name for the union of the $\dot{\mathcal I}_\alpha$'s. Let $G_\kappa \subseteq \P_\kappa$ be generic over $V$ and, in $V[G_\kappa]$ let $\mathcal I_\alpha$, $\mathcal F_\alpha$, $m_\alpha$ etc refer to the evaluation of all of the corresponding names with the dots. Finally let for each $\alpha < \kappa$ the generic $G_\alpha = G_\kappa \cap \P_\alpha$ as usual. For the rest of this section we fix all such objects. We refer to $\mathcal I_\kappa$ (in $V[G_\kappa]$) as {\em the generic m.i.f.}. Let us now restate Theorem \ref{mainthm} more precisely.

\begin{theorem}
    $\P_\kappa$ forces that the generic m.i.f. is densely maximal and its density filter is a $P$-point. Moreover, if $\kappa$ is a cardinal and $\kappa^{{<}\kappa} = \kappa$ in the ground model then the $\dot{\mathcal F}_\alpha$'s can be chosen so that the generic mif is selective. \label{mainthmbetter}
\end{theorem}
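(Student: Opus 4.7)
My plan is to prove the theorem in three parts.

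First, for dense maximality, I would apply Lemma~\ref{mainlemma} by exhibiting $\mathrm{fil}(\mathcal{I}_\kappa)$ as the unique diagonalization filter, taking the $\mathcal{F}_\alpha$'s as an increasing chain (the elements of $\mathcal{F}_\alpha$ continue to meet every Boolean combination of $\mathcal{I}_{\alpha+1}$ by Mathias genericity, so $\mathcal{F}_\alpha$ extends to a diagonalization filter for $\mathcal{I}_{\alpha+1}$). Any real $X \in V[G_\kappa]$ appears in $V[G_\alpha]$ for some $\alpha < \kappa$ by finite support and $\mathrm{cf}(\kappa)>\omega$. If $X \in \mathcal{F}_\alpha$, then for any $h \in \mathsf{FF}(\mathcal{I}_\kappa)$ I would pick a fresh $\beta \geq \alpha$ with $m_\beta \notin \mathrm{dom}(h)$ and set $h' = h \cup \{(m_\beta, 0)\}$; since $X \in \mathcal{F}_\beta$ yields $m_\beta \subseteq^* X$, we get $\mathcal{I}_\kappa^{h'} \subseteq m_\beta \subseteq^* X$, placing $X$ in $\mathrm{fil}(\mathcal{I}_\kappa)$. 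Otherwise, maximality of $\mathcal{F}_\alpha$ as a diagonalization filter produces $Y \in \mathcal{F}_\alpha$ and $h_0 \in \mathsf{FF}(\mathcal{I}_\alpha)$ with $X \cap Y \cap \mathcal{I}_\alpha^{h_0}$ finite, and then $m_\alpha \subseteq^* Y$ gives $X$ almost disjoint from the Boolean combination $\mathcal{I}_{\alpha+1}^{h_0 \cup \{(m_\alpha, 0)\}}$, placing $X$ on the ideal side of the partition property from Fact~\ref{pp}.

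For the $P$-filter property, given $\{X_n\}_{n<\omega} \subseteq \mathrm{fil}(\mathcal{I}_\kappa)$, I would apply Lemma~\ref{lemma1}(2) to find a stage $\alpha$ with $\{X_n\} \subseteq \mathrm{fil}(\mathcal{I}_\alpha) \subseteq \mathcal{F}_\alpha$. While the Mathias real $m_\alpha$ is a natural pseudointersection, it lies in $\mathcal{I}_\kappa$ itself and so is not in $\mathrm{fil}(\mathcal{I}_\kappa)$. The candidate is instead $Y = m_\alpha \cup W$ where $W \subseteq \omega \setminus m_\alpha$ is constructed inside the restricted family $\tilde{\mathcal{I}}_\alpha := \{A \cap (\omega \setminus m_\alpha) : A \in \mathcal{I}_\alpha\}$, which is independent on $\omega \setminus m_\alpha$ by Mathias genericity (the Boolean combinations $\mathcal{I}_\alpha^h \setminus m_\alpha$ are forced infinite by a density argument for $\mathbb{M}(\mathcal{F}_\alpha)$). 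The set $W$ is taken as a pseudointersection of the restrictions $X_n \cap (\omega \setminus m_\alpha)$ inside $\mathrm{fil}(\tilde{\mathcal{I}}_\alpha)$; then $Y \in \mathrm{fil}(\mathcal{I}_{\alpha+1}) \subseteq \mathrm{fil}(\mathcal{I}_\kappa)$ because each $h \in \mathsf{FF}(\mathcal{I}_{\alpha+1})$ can be extended either by $(m_\alpha,0)$ (placing the Boolean combination inside $m_\alpha \subseteq Y$) or, if $h(m_\alpha)=1$, by appealing to the density-filter property of $W$ inside $\omega \setminus m_\alpha$.

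For the $Q$-filter conclusion under $\kappa^{<\kappa}=\kappa$, I would enumerate all $\P_\kappa$-nice-names for partitions of $\omega$ into finite intervals as $\{\dot{P}_\alpha : \alpha < \kappa\}$ by standard bookkeeping, and at stage $\alpha$ choose $\mathcal{F}_\alpha$ to contain a semi-selector for the partition currently being handled. The key auxiliary fact is: for any independent family $\mathcal{J}$ and any partition $P$ of $\omega$ into finite intervals there is a semi-selector $A$ for $P$ meeting every Boolean combination of $\mathcal{J}$ infinitely. By Lemma~\ref{lemma1}(3) it suffices to meet Boolean combinations of a countable subfamily, and a diagonalization that picks, for each Boolean combination and each $n$, one element from an unused block $I_k$ lying in that combination gives $A$; the filter generated by the existing $\mathcal{F}_\alpha$-data together with $A$ still meets every Boolean combination infinitely (using that $\mathcal{F}_\alpha$ restricted so far is already a diagonalization filter, so elements of it almost contain refining Boolean combinations of $\mathcal{I}_\alpha$), and can be extended to a diagonalization filter $\mathcal{F}_\alpha$. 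The main obstacle I anticipate is the combinatorial construction of $W$ in Part 2, since one must show that the restricted family $\tilde{\mathcal{I}}_\alpha$ inherits enough structure (most likely via an inductive invariant along the iteration) to make the needed pseudointersection available inside its density filter.
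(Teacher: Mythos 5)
Your first part (dense maximality) is essentially the paper's argument: the same dichotomy on whether $X$ enters a diagonalization filter, the same use of a fresh Mathias real $m_\beta$ with $h'=h\cup\{(m_\beta,0)\}$ in the positive case, and the same use of maximality of $\mathcal F_\alpha$ plus $m_\alpha\subseteq^* Y$ in the negative case. That part is fine. The other two parts, however, each contain a genuine gap, and in both cases the missing idea is the same one: the pseudointersections and semiselectors witnessing the $P$- and $Q$-properties must be manufactured from \emph{infinitely many future Mathias reals}, not extracted from the single stage $\alpha$.

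For the $P$-filter: you correctly observe that $m_\alpha$ itself is not in $\mathrm{fil}(\mathcal I_\kappa)$, but your repair --- setting $Y=m_\alpha\cup W$ with $W$ a pseudointersection of the $X_n\cap(\omega\setminus m_\alpha)$ lying in $\mathrm{fil}(\tilde{\mathcal I}_\alpha)$ --- is circular: producing such a $W$ is precisely the assertion that the density filter of the restricted family at stage $\alpha$ is already a $P$-filter, which is not true of an arbitrary stage (e.g.\ $\mathcal I_0$ is an arbitrary ground-model family) and is the very thing being proved. You flag this yourself as the ``main obstacle,'' but it is not an obstacle to be smoothed over; it is the theorem. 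The actual mechanism is to look $\omega$ stages ahead: since each $X_n\in\mathrm{fil}(\mathcal I_\gamma)$ lies in every later diagonalization filter, one has $m_{\gamma+n}\subseteq^* X_k$ for all $n,k$, and one takes $B=\bigcup_{n<\omega}(m_{\gamma+n}\setminus f(n))$ for a suitable dominating $f$. This $B$ is simultaneously a pseudointersection of the $X_k$ and a member of the density filter, because every finite $h$ omits some $m_{\gamma+n}$ and $\mathcal I_\kappa^{h\cup\{(m_{\gamma+n},0)\}}\subseteq m_{\gamma+n}\subseteq^* B$. No restriction to $\omega\setminus m_\alpha$ is needed.

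For the $Q$-filter: your ``key auxiliary fact'' --- that for \emph{any} independent family $\mathcal J$ and any interval partition there is a semiselector meeting every Boolean combination infinitely --- is both unproved and, as stated, too strong. For a densely maximal $\mathcal J$ the partition property forces any such semiselector into $\mathrm{fil}(\mathcal J)$, so the fact would imply that the density filter of every densely maximal family is a $Q$-filter, which is exactly the kind of statement the paper leaves open. Your proposed proof also does not go through: a greedy diagonalization handles only countably many Boolean combinations, whereas $\mathcal I_\alpha$ is in general uncountable, and Lemma~\ref{lemma1}(3) concerns the density filter, not the property of meeting all Boolean combinations, so it does not effect the reduction you want. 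The correct construction again uses the generic reals: over $\omega$ further stages one lets $A$ consist of the minima $\min(m_{\gamma+n_k})$ of suitably chosen future Mathias reals, so that $A$ $Q$-dominates the given $f$ (equivalently, semiselects the given interval partition, via the characterization of $Q$-filters by $Q$-domination); genericity, not a ground-model diagonalization, is what guarantees that $A$ meets every Boolean combination of $\mathcal I_{\gamma+\omega}$, after which $A$ can be inserted into all subsequent diagonalization filters and hence lands in $\mathrm{fil}(\mathcal I_\kappa)$. The bookkeeping under $\kappa^{<\kappa}=\kappa$ is then as you describe.
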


It is unclear whether the above can be improved so as to eliminate the need to choose the filters so as to ensure that ${\rm fil}(\mathcal I_\kappa)$ is a $Q$-filter. We will discuss this more later. To prove Theorem \ref{mainthmbetter} we need to show three things: that $\mathcal I_\kappa$ is densely maximal, that its density filter is a $P$-point and that its density filter is a $Q$-point given good enough bookkeeping. We will prove each of these separately, beginning with dense maximality.

\subsection{Dense Maximality}
Again, we fix the notation described in the first paragraph of this section. 
\begin{lemma}
    $\P_\kappa$ forces that the generic m.i.f. is densely maximal. \label{dense}
\end{lemma}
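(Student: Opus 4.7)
The plan is to reduce the lemma to a single-step computation by exploiting the ccc of the iteration. Given $X \in [\omega]^\omega$ and $h \in \mathsf{FF}(\mathcal I_\kappa)$ in $V[G_\kappa]$, I would first observe that, since finite support iterations of filter-Mathias forcings are ccc and $\kappa$ has uncountable cofinality, both $X$ and $\mathrm{dom}(h)$ live in $V[G_\alpha]$ for some $\alpha<\kappa$. Moreover, any $h'$ with $\mathrm{dom}(h')\subseteq\mathcal I_{\alpha+1}$ satisfies $\mathcal I_\kappa^{h'}=\mathcal I_{\alpha+1}^{h'}$, so the task collapses to producing a witness $h'\supseteq h$ at the single step from $\alpha$ to $\alpha+1$.

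The trivial subcase is when $X\cap\mathcal I_\alpha^h$ or $\mathcal I_\alpha^h\setminus X$ is already finite in $V[G_\alpha]$, in which case $h':=h$ works. Outside this subcase, the key idea is not to work with $X$ directly but with the ``stuffed'' set $X^+:=X\cup(\omega\setminus\mathcal I_\alpha^h)$, designed so that anything outside $X^+$ automatically lies inside $\mathcal I_\alpha^h$. The argument then splits on whether $X^+\in\mathcal F_\alpha$. If $X^+\in\mathcal F_\alpha$, then since the Mathias real $m_\alpha$ diagonalizes $\mathcal F_\alpha$, $m_\alpha\subseteq^* X^+$, and $h':=h\cup\{(m_\alpha,0)\}$ makes $\mathcal I_{\alpha+1}^{h'}=m_\alpha\cap\mathcal I_\alpha^h$ almost contained in $X$. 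If $X^+\notin\mathcal F_\alpha$, the maximality of $\mathcal F_\alpha$ as a diagonalization filter produces $F\in\mathcal F_\alpha$ and $h^*\in\mathsf{FF}(\mathcal I_\alpha)$ with $X^+\cap F\cap\mathcal I_\alpha^{h^*}$ finite; the shape of $X^+$ then forces $h\cup h^*$ to be a function, because otherwise $\mathcal I_\alpha^{h^*}\subseteq\omega\setminus\mathcal I_\alpha^h\subseteq X^+$ would reduce to $F\cap\mathcal I_\alpha^{h^*}$ being finite, contradicting $F\in\mathcal F_\alpha$. With $h':=h\cup h^*\cup\{(m_\alpha,0)\}$, the inclusion $m_\alpha\subseteq^* F$ then yields $\mathcal I_{\alpha+1}^{h'}\cap X\subseteq^* F\cap\mathcal I_\alpha^{h^*}\cap X^+$, which is finite.

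The main obstacle, and the reason this ``$X^+$ trick'' is needed, is that the standard consequence of Mathias-with-a-diagonalization-filter forcing---namely that no ground model real can be added independently to $\mathcal I_\alpha\cup\{m_\alpha\}$---asserts only the existence of \emph{some} witnessing Boolean combination, with no a priori guarantee that it extends the prescribed $h$. Replacing $X$ by $X^+$ is precisely the device that converts this bare existence statement into one whose witness automatically respects $h$, via the maximality clause in the definition of diagonalization filter. Once this is arranged, no further induction or fusion at limit stages is required, since the entire argument takes place at the single step from $\alpha$ to $\alpha+1$.
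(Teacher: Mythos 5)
Your argument is correct, but it follows a genuinely different route from the paper's. The paper does not verify the definition of dense maximality directly; it invokes Lemma \ref{mainlemma} (dense maximality is equivalent to the density filter being the unique diagonalization filter) and so reduces the lemma to showing that any $X \in V[G_\kappa]$ meeting every Boolean combination infinitely lies in ${\rm fil}(\mathcal I_\kappa)$. The case split is then close in spirit to yours: if $X$ enters some diagonalization filter $\mathcal F_\beta$ at a stage $\beta \geq \alpha$, then $h \cup \{(m_\beta,0)\}$ witnesses $X \in {\rm fil}(\mathcal I_\kappa)$; otherwise maximality of $\mathcal F_\alpha$ yields $Y \in \mathcal F_\alpha$ and $g \in \mathsf{FF}(\mathcal I_\alpha)$ with $X \cap Y \cap \mathcal I_\alpha^g$ finite, and since $m_\alpha \subseteq^* Y$ the Boolean combination $m_\alpha \cap \mathcal I_\alpha^g$ almost avoids $X$, contradicting the hypothesis on $X$. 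In that approach the work of upgrading ``some witnessing Boolean combination exists'' to ``a witness above the prescribed $h$ exists'' is delegated entirely to Lemma \ref{mainlemma} and the partition property. Your device $X^+ = X \cup (\omega \setminus \mathcal I_\alpha^h)$ performs exactly that upgrade by hand: it forces the $h^*$ produced by maximality of $\mathcal F_\alpha$ to be compatible with $h$ (since otherwise $\mathcal I_\alpha^{h^*} \subseteq X^+$ would make $F \cap \mathcal I_\alpha^{h^*}$ finite for some $F \in \mathcal F_\alpha$, which is impossible), so you land directly on the second disjunct of the definition rather than arguing by contradiction. Both proofs rest on the same two ingredients ($m_\alpha$ diagonalizes $\mathcal F_\alpha$, and $\mathcal F_\alpha$ is maximal among diagonalization filters); yours is self-contained at the level of the definition and entirely local to the single step from $\alpha$ to $\alpha+1$, while the paper's is marginally shorter because it leans on the already-established equivalence. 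Your reduction to a stage $\alpha$ with $X \in V[G_\alpha]$ and ${\rm dom}(h) \subseteq \mathcal I_\alpha$ via the ccc and the uncountable cofinality of $\kappa$ is the same as the paper's, and all the almost-inclusion computations in your two cases check out.
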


\begin{proof}
%Let us be clear about the set up. Fix a cardinal $\kappa$ of uncountable cofinality and define a finite support ccc iteration as follows. First let $\mathcal I_0$ be some fixed, ground model independent family, $\P_0$ be $\mathbb M(\mathcal I_0)$ (for any diagonalization filter). Next assume $\P_\alpha$ is given, as is $\dot{\mathcal I}_\alpha$ and let $\dot{\Q}_\alpha$ be a $\P_\alpha$-name for $\mathbb M(\dot{\mathcal I}_\alpha)$ and let $\dot{\mathcal I}_{\alpha + 1}$ name the independent family where $\dot{\mathcal I}_\alpha$ is extended by $\dot{m}_\alpha$ the generic real added by $\dot{\mathbb Q}_\alpha$. Let $\dot{\mathcal I}_\kappa$ be the union of the previous $\dot{\mathcal I}_\alpha$'s. %Moreover arrange this choice of diagonalization filters so that for every nice name for a real $\dot{X}$ in some diagonalization filter for $\dot{\mathcal I}_\kappa$ then there are unboundedly many $\alpha$'s with $\dot{X}$ forced to be in the diagonalization filter used at stage $\alpha$. The details of how this is managed are well known and we omit the nitty gritty details. 

By Lemma \ref{mainlemma} it suffices to show that if $X \in V[G_\kappa]$ has infinite intersection with every Boolean combination then it is in the density filter of $\mathcal I_\kappa$ as this implies that the density filter is the unique diagonalization filter and hence $\mathcal I_\kappa$ is densely maximal. So suppose that $X \in V[G_\kappa]$ has infinite intersection with every Boolean combination and let $h \in \mathsf{FF}(\mathcal I)$. We need to find an $h' \supseteq h$ so that $\mathcal I^{h'}_\kappa \setminus X$ is finite. Let $\alpha < \kappa$ be such that $X, h \in V[G_\alpha]$. 

\noindent \underline{Case 1:} There is a $\beta \geq \alpha$ so that $X$ is forced to be in $\dot{\mathcal F}_\beta$, the diagonalization filter used at stage $\beta$. Now the generic real $m_{\beta} \in \mathcal I_\kappa$ is a pseudointersection of this filter and in particular $m_{\beta} \subseteq^* X$. But then we get that if $h' = h \cup \langle m_{\beta}, 0\rangle$ then $\mathcal I_{\beta}^{h'} \subseteq^* m_{\beta} \subseteq^* X$ so $h'$ is as needed. Note that this was valid since $h \in V[G_\alpha]$ and so in particular $\beta \notin {\rm dom}(h)$. 

\noindent \underline{Case 2:} $X$ is not forced to be in any diagonalization filter at any stage $\beta \geq \alpha$. Work $V[G_\alpha]$ and let $\mathcal F_\alpha$ be the choice of diagonalization filter for $\mathcal I_\alpha$. Note that the assumption implies in particular that $X$ is not in $\mathcal F_\alpha$. Since $\mathcal F_\alpha$ is maximal with the property that every element has infinite intersection with every Boolean combination of $\mathcal I_\alpha$, there must be a $Y \in \mathcal F_\alpha$ and a Boolean combination $g \in \mathsf{FF}(\mathcal I_\alpha)$ so that $X \cap Y \cap \mathcal I^g_\alpha$ is finite. But now note that $\dot{m}_\alpha$ is forced to be an almost subset of $Y$ thus we get that $V[G_{\alpha+1}] \models$``$X \cap \dot{m}_{\alpha}^{G_{\alpha + 1}} \cap \mathcal I_\alpha^g$ is finite". But $\dot{m}_{\alpha}^{G_{\alpha + 1}} \cap \mathcal I_\alpha^g$ is a Boolean combination of $\mathcal I_\kappa$, contradicting the defining property of $X$. 
\end{proof}

Having established dense maximality we go on to consider the property of the density filter being a $P$-filter.

\subsection{${\rm fil}(\mathcal I_\kappa)$ is a $P$-Filter}
We continue with our notation outlined above.

\begin{lemma}
The density filter of the generic m.i.f. added by $\P_\kappa$ is a P-filter. \label{pfilter}
\end{lemma}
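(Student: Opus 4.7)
The plan is to extract from the proof of Lemma~\ref{dense} a ``strong absorption'' principle identifying ${\rm fil}(\mathcal I_\kappa)\cap V[G_\alpha]$ with $\mathcal F_\alpha$, and then to build a witnessing pseudointersection by splicing generic Mathias reals along nested finite intersections of the $X_n$'s.

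First I would prove the \emph{Strong Absorption Claim}: for every $\alpha<\kappa$ and every $X\in V[G_\alpha]\cap[\omega]^\omega$, if $X\in{\rm fil}(\mathcal I_\kappa)$ then $X\in\mathcal F_\alpha$. The argument is a twist on Case~1 of Lemma~\ref{dense}: for any $Y\in\mathcal F_\alpha$ and $h\in\mathsf{FF}(\mathcal I_\alpha)$ one has $m_\alpha\subseteq^* Y$ and $m_\alpha\cap\mathcal I_\alpha^h=\mathcal I_{\alpha+1}^{h\cup\langle m_\alpha,0\rangle}$ is a Boolean combination of $\mathcal I_\kappa$, so since by Lemmas~\ref{mainlemma} and~\ref{dense} $X$ is in the unique diagonalization filter of $\mathcal I_\kappa$, $X\cap m_\alpha\cap\mathcal I_\alpha^h$ and hence $X\cap Y\cap\mathcal I_\alpha^h$ are infinite; by absoluteness this holds in $V[G_\alpha]$ too. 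Thus the filter generated by $\mathcal F_\alpha\cup\{X\}$ remains a diagonalization-filter candidate for $\mathcal I_\alpha$ there, and maximality of $\mathcal F_\alpha$ forces $X\in\mathcal F_\alpha$. A consequence is $\mathcal F_\beta\subseteq\mathcal F_\gamma$ whenever $\beta\leq\gamma<\kappa$.

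Now let $\{X_n\mid n<\omega\}\subseteq{\rm fil}(\mathcal I_\kappa)$. Since each $\mathbb M(\mathcal F_\beta)$ is $\sigma$-centred (conditions with a fixed stem form a centred class), $\P_\kappa$ is ccc, and together with ${\rm cf}(\kappa)>\omega$ this yields $\alpha<\kappa$ with $\{X_n\}\in V[G_\alpha]$. By Strong Absorption $X_k\in\mathcal F_\beta$ for every $\beta\geq\alpha$ and every $k$, so $m_\beta\subseteq^* X_k$. Setting $\beta_n:=\alpha+n<\kappa$, $S_n:=\bigcap_{k\leq n}X_k\in{\rm fil}(\mathcal I_\kappa)$, I let
\[ Y := \bigcup_{n<\omega}\bigl(m_{\beta_n}\cap S_n\bigr). \]
For fixed $k$, $S_n\subseteq X_k$ for $n\geq k$ while $m_{\beta_n}\cap S_n\setminus X_k\subseteq m_{\beta_n}\setminus X_k$ is finite for $n<k$, giving $Y\subseteq^* X_k$. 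For $h\in\mathsf{FF}(\mathcal I_\kappa)$, since ${\rm dom}(h)$ is finite pick $n$ with $m_{\beta_n}\notin{\rm dom}(h)$; then $\mathcal I_\kappa^{h\cup\langle m_{\beta_n},0\rangle}\subseteq m_{\beta_n}$ is a Boolean combination of $\mathcal I_\kappa$, and $S_n\in{\rm fil}(\mathcal I_\kappa)$ makes $S_n\cap\mathcal I_\kappa^{h\cup\langle m_{\beta_n},0\rangle}$ infinite and contained in $Y\cap\mathcal I_\kappa^h$. So $Y$ meets every Boolean combination of $\mathcal I_\kappa$ infinitely, and by dense maximality $Y\in{\rm fil}(\mathcal I_\kappa)$.

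The main obstacle is that no single obvious candidate works: every Mathias real $m_\beta$ lies in $\mathcal I_\kappa$ and so is \emph{never} in ${\rm fil}(\mathcal I_\kappa)$, while $\mathcal F_\alpha$ itself need not be a $P$-filter in $V[G_\alpha]$. The trick is the diagonal union $\bigcup_n(m_{\beta_n}\cap S_n)$: the nesting $S_n\supseteq S_{n+1}$ secures the pseudointersection property, while the fact that any finite $h$ leaves cofinitely many $m_{\beta_n}$'s unconstrained lets $Y$ inherit infinite intersection with every Boolean combination from tails of the $S_n$'s.
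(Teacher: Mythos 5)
Your proof is correct and follows essentially the same route as the paper: both capture the countable family at a stage $\alpha<\kappa$, observe that each $X_k$ lands in every later diagonalization filter so that $m_{\alpha+n}\subseteq^* X_k$, and then take a trimmed diagonal union of the tail $\langle m_{\alpha+n}\rangle_{n<\omega}$, which lies in the density filter because any finite $h$ omits cofinitely many of the $m_{\alpha+n}$. The only cosmetic differences are that the paper trims by a dominating function (taking $\bigcup_{n}(m_{\gamma+n}\setminus f(n))$) rather than by intersecting with your nested sets $S_n$, and it places the $X_k$'s into the later filters via the monotonicity of density filters (Lemma \ref{lemma1}) rather than via your Strong Absorption claim deduced from dense maximality.
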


\begin{proof}
Let $\{\dot{A}_n \; | \; n < \omega\}$ name an $\omega$-sequence of elements of ${\rm fil}(\mathcal I_\kappa)$. By the fact that $\kappa$ has uncountable cofinality there is a $\gamma < \kappa$ so that $\dot{A}_n \in V[G_\gamma]$ and, moreover, by Lemma \ref{lemma1} we can find such a $\gamma$ so that $\dot{A}_n \in {\rm fil}(\mathcal I_\gamma)$ for all $n < \omega$. Work in such a $V[G_\gamma]$ and let $A_n$ be the evaluation of $\dot{A}_n$ in this model. 

Since $A_n \in {\rm fil}(\mathcal I_\gamma)$ for each $n < \omega$ we must have that for each $n < \omega$ the set $A_n$ is in every diagonalization filter we choose from stage $\gamma$ on, again by Lemma \ref{lemma1}. Consequently for all $\xi > \gamma$ we have that $m_\xi \subseteq^* A_n$ for all $n < \omega$. In particular (working now in $V[G_{\gamma + \omega}]$) we have $m_{\gamma + n} \subseteq^* A_m$ for all $n, m < \omega$. For each $k < \omega$ let $l_k(n)$ be such that $m_{\gamma + n} \setminus l_k(n) \subseteq A_k$. Let $f:\omega \to \omega$ dominate all of the $l_k$'s. Finally set $$B = \bigcup_{n < \omega} (m_{\gamma + n} \setminus f(n))$$

We claim that $B \subseteq^* A_n$ for each $n < \omega$ and $B$ is forced to be in the density filter for $\mathcal I_\kappa$, which completes the proof. For the first part fix $k < \omega$ and let $m$ be such that for all $n > m$ we have $f(n) > l_k(n)$. Now we have $$B = \bigcup_{n \leq m} (m_{\gamma + n} \setminus f(n)) \cup \bigcup_{n > m} (m_{\gamma + n} \setminus f(n))$$
Observe that $\bigcup_{n \leq m} m_{\gamma + n} \subseteq^* A_k$ since it is a finite union of almost subsets of $A_k$ and $\bigcup_{n > m} m_{\gamma + n} \setminus f(n) \subseteq A_k$ (true inclusion - not mod finite) since $f(n) > l_k(n)$ and by definition of $l_k$ we have that $m_{\gamma + n} \setminus l_k(n) \subseteq A_k$. Putting these two observations together proves the first part of our claim, namely that $B \subseteq^* A_k$.

For the second part let $h \in \mathsf{FF}(\mathcal I_\kappa)$. Since $h$ is finite there is an $n < \omega$ so that $m_{\gamma + n} \notin {\rm dom}(h)$. Fix such an $n < \omega$ and let $h' = h \cup \langle m_{\gamma + n}, 0\rangle$. Now $\mathcal I_\kappa^{h'} = \mathcal I_\kappa^h \cap m_{\gamma + n} \subseteq m_{\gamma + n} \subseteq^* B$ so $B$ is in the density filter as needed.
\end{proof}

\begin{remark}
By one of the results of \cite{FMIdeals} we know that forcing with $\mathbb{M}(\mathcal I)$ for $\mathcal I$ selective adds a dominating real. It follows that, once we have proved Lemma \ref{qfilter} below and hence Theorem \ref{mainthmbetter} in Mathias iterations as we have been describing we get that $\mfb = \mfd = {\rm cof}(\kappa)$. The existence of such dominating reals allows us to sup up the argument for \ref{pfilter} (since the only place we used countability was to get a dominating function) so we will have shown that in fact, assuming we choose the diagonalization filters correctly, ${\rm fil}(\mathcal I_\kappa)$ is a $P_{{\rm cf}(\kappa)}$-filter i.e. any ${<}{\rm cof}(\kappa)$-many elements have a pseudointersection in the filter.
\end{remark}

The proof of Lemma \ref{pfilter} actually shows that a basis for the density filter of $\mathcal I_\kappa$ in $V[G_\kappa]$ is given by simply $\bigcup_{n < \omega} (m_{\xi_n} \setminus f(n))$ for functions $f:\omega \to \omega$ and elements $m_{\xi_n} \in \mathcal I_\kappa$ (with infinitely many distinct). Extracting from this we get the following.

\begin{lemma}
    Let $A \in [\omega]^\omega \cap V[\mathcal I_\kappa]$. The following are equivalent.
    \begin{enumerate}
        \item There is a $\gamma < \kappa$ so that for all $\alpha \in (\gamma, \kappa)$ we have $m_\gamma \subseteq^* A$.
        \item There are strictly increasing ordinals $\xi_n < \kappa$ for $n < \omega$ so that for all $n < \omega$ we have $m_{\xi_n} \subseteq^* A$.
        \item $A \in {\rm fil}(\mathcal I_\kappa)$.
    \end{enumerate} \label{basis}
\end{lemma}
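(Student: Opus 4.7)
The plan is to prove the lemma by a cycle $(1)\Rightarrow(2)\Rightarrow(3)\Rightarrow(1)$. Note that condition (1) as written should be read as ``$m_\alpha\subseteq^* A$'' (since otherwise the universal quantification over $\alpha$ says nothing about $m_\gamma$); with this reading the implication $(1)\Rightarrow(2)$ is immediate by setting $\xi_n=\gamma+n+1$.

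For $(2)\Rightarrow(3)$ I would essentially repeat the last paragraph of the proof of Lemma \ref{pfilter}. Fix $h\in\mathsf{FF}(\mathcal I_\kappa)$. Since $\mathrm{dom}(h)$ is finite but the $\xi_n$ are strictly increasing, there is some $n<\omega$ with $m_{\xi_n}\notin\mathrm{dom}(h)$; setting $h'=h\cup\langle m_{\xi_n},0\rangle\in\mathsf{FF}(\mathcal I_\kappa)$ gives $\mathcal I_\kappa^{h'}=\mathcal I_\kappa^h\cap m_{\xi_n}\subseteq m_{\xi_n}\subseteq^* A$, so $\mathcal I_\kappa^{h'}\setminus A$ is finite, witnessing $A\in\mathrm{fil}(\mathcal I_\kappa)$.

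The interesting direction is $(3)\Rightarrow(1)$, and it is really a bookkeeping argument on top of the observations already made in Section 2. Assume $A\in\mathrm{fil}(\mathcal I_\kappa)$. By Lemma \ref{lemma1}(3) there is a countable $\mathcal J\subseteq\mathcal I_\kappa$ with $A\in\mathrm{fil}(\mathcal J)$. Since $\mathcal I_\kappa=\bigcup_{\alpha<\kappa}\mathcal I_\alpha$ and $\mathrm{cf}(\kappa)>\omega$, there is a $\gamma<\kappa$ with $\mathcal J\subseteq\mathcal I_\gamma$, and then $A\in\mathrm{fil}(\mathcal J)\subseteq\mathrm{fil}(\mathcal I_\alpha)$ for every $\alpha\geq\gamma$ by Lemma \ref{lemma1}(1). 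Now recall from the discussion preceding Lemma \ref{mainlemma} that the density filter of any independent family is contained in every diagonalization filter for that family; hence $A\in\mathcal F_\alpha$ for each $\alpha\geq\gamma$. Since the Mathias generic real $m_\alpha$ for $\mathbb M(\mathcal F_\alpha)$ is a pseudointersection of $\mathcal F_\alpha$, we obtain $m_\alpha\subseteq^* A$ for every $\alpha\in(\gamma,\kappa)$, which is exactly (1).

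There is no real obstacle here beyond being careful about which lemma gives the right kind of reflection: part (3) of Lemma \ref{lemma1} is what lets us pass from an element of $\mathrm{fil}(\mathcal I_\kappa)$ to a countable subfamily, and then uncountable cofinality of $\kappa$ (rather than regularity of $\kappa$, which is what part (2) of that lemma would require) suffices to pull everything inside some $\mathcal I_\gamma$. The inclusion ``density filter $\subseteq$ diagonalization filter'', which makes the final step work uniformly at every stage regardless of whether $\mathcal I_\alpha$ is itself densely maximal, is the only other ingredient.
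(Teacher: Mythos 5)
Your proof is correct and follows essentially the same route as the paper's: $(1)\Rightarrow(2)$ trivially, $(2)\Rightarrow(3)$ by repeating the final paragraph of the proof of Lemma \ref{pfilter}, and $(3)\Rightarrow(1)$ via reflection to a stage $\gamma$ together with the inclusion of the density filter in every diagonalization filter. The one detail the paper makes explicit that you gloss over is that to conclude $A\in\mathcal F_\alpha$ one also needs $A\in V[G_\alpha]$ (so that $A$ is even a candidate for membership in the filter chosen at stage $\alpha$); this follows from the ccc and is absorbed into the choice of $\gamma$.
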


\begin{proof}
Fix $A$ as above and work in $V[\mathcal I_\kappa]$. Since (1) is an obvious strengthening of (2) we have (1) implies (2) and (2) implies (3) is exactly as in the proof of Lemma \ref{pfilter}. Thus it suffices to show that (3) implies (1). Observe that by the ccc there is a $\gamma < \kappa$ so that $A \in [\omega]^\omega \cap V[\mathcal I_\gamma]$. Moreover since $A \in {\rm fil}(\mathcal I_\kappa)$ we can assume without loss of generality that $A \in {\rm fil}(\mathcal I_\gamma)$ (the $\gamma$ where $A$ first appears might be before the one in which $A$ ends up in the density filter but we just take the latter in this case). Now we get that $A \in \mathcal F_\alpha$ for each $\alpha > \gamma$ hence $m_\alpha \subseteq^* A$ for every such $\alpha$ since $m_\alpha$ is a pseudointersection of $\mathcal F_\alpha$. 
\end{proof}

\subsection{${\rm fil}(\mathcal I_\kappa)$ is a $Q$-Filter}

Finally we will show that ${\rm fil}(\mathcal I_\kappa)$ is a $Q$-filter when the diagonalization filters $\mathcal F_\alpha$ are chosen carefully enough. As noted in the hypotheses of Theorem \ref{mainthmbetter}, we will eventually need that $\kappa$ is a cardinal and $\kappa^{{<}\kappa} = \kappa$ but we will state this explicitly when we need it. For now we proceed with the notation given above. We will use the following characterization of $Q$-filters.
\begin{fact}[See Lemma 3.7 of \cite{CFGS21}]
Let $\mathcal F$ be a filter on $\omega$. The following are equivalent.
\begin{enumerate}
\item
$\mathcal F$ is a $Q$-filter.
\item
For every strictly increasing $f:\omega \to \omega$ there is a $A \in \mathcal F$ so that, letting $A = \{k(n)\}_{n < \omega}$ be an increasing enumeration of $A$, $f(k(n)) < k(n+1)$.
\end{enumerate}
\end{fact}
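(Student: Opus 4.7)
The plan is to prove the two implications separately.

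For the easier direction $(2) \Rightarrow (1)$, given a partition $\{F_n\}_{n < \omega}$ of $\omega$ into finite sets, I would construct a strictly increasing $f$ so that any $A \in \mathcal F$ witnessing $(2)$ for this $f$ is automatically a semiselector. Letting $m(k)$ denote the index of the block containing $k$, set
\[
f(k) := \max\{\max F_{m(j)} : j \leq k\} + k + 1,
\]
which is strictly increasing and satisfies $f(k) > \max F_{m(k)}$ for every $k$. If $A = \{k(n)\}_{n < \omega} \in \mathcal F$ satisfies $k(n+1) > f(k(n))$ for every $n$, then a short induction shows that whenever $i < j$, $k(j) \geq k(i+1) > f(k(i)) \geq \max F_{m(k(i))}$, so $k(j) \notin F_{m(k(i))}$. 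Hence $m$ is injective on $A$, i.e.\ $A$ is a semiselector for $\{F_n\}$.

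The main obstacle is the direction $(1) \Rightarrow (2)$. Given a strictly increasing $f$, the natural attempt is to define $a_0 := 0$, $a_{n+1} := f(a_n) + 1$, $I_n := [a_n, a_{n+1})$, and apply the $Q$-filter property to $\{I_n\}$. This does \emph{not} suffice, because consecutive elements of a semiselector for $\{I_n\}$ may fall in adjacent intervals $I_m, I_{m+1}$, and with $k$ near the right endpoint of $I_m$ one can easily have $f(k) \geq k'$. The fix is to apply the $Q$-filter property twice, to the two overlapping ``double-block'' partitions
\[
P^1_p := [a_{2p}, a_{2p+2}) \quad (p \geq 0), \qquad P^2_0 := [0, a_1),\ P^2_p := [a_{2p-1}, a_{2p+1}) \quad (p \geq 1),
\]
obtaining $A_1, A_2 \in \mathcal F$ semiselectors for $P^1, P^2$ respectively, and setting $A := A_1 \cap A_2 \in \mathcal F$.

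The key combinatorial observation is that if $m$ is even then $I_m$ and $I_{m+1}$ lie in a common block of $P^1$ (namely $P^1_{m/2}$), while if $m$ is odd then they lie in a common block of $P^2$ (namely $P^2_{(m+1)/2}$). Hence being a semiselector for both partitions forbids the jump $m' = m+1$ for any two consecutive elements $k < k'$ of $A$ with $k \in I_m$, $k' \in I_{m'}$, and therefore forces $m' \geq m + 2$. Since $a_{m+2} = f(a_{m+1}) + 1$ and $k \leq a_{m+1} - 1$, this gives $k' \geq a_{m+2} = f(a_{m+1}) + 1 > f(a_{m+1}-1) \geq f(k)$, as required. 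The only nontrivial step is this trick of doubling and shifting the partition to reach the gap $m' \geq m + 2$; the rest is bookkeeping.
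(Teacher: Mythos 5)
The paper does not prove this statement; it is quoted as a Fact with a citation to Lemma 3.7 of \cite{CFGS21}, so there is no in-paper argument to compare against. Your proof is correct and is essentially the standard one: the $(2)\Rightarrow(1)$ direction via a function $f$ that clears the block containing its argument, and the $(1)\Rightarrow(2)$ direction via the interval partition $I_n=[a_n,a_{n+1})$ with $a_{n+1}=f(a_n)+1$, repaired by intersecting semiselectors for the two interleaved double-block coarsenings so that consecutive elements of $A$ must skip a full interval. Both implications check out (the only implicit convention is that members of $\mathcal F$ are infinite, which is the standing assumption for the filters in this paper), so there is nothing to fix.
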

Moving forward, in the pursuit of brevity, if $f$ and $A$ have the property described in (2) above we will say that $A$ $Q$-{\em dominates} $f$. We need one more fact.
\begin{lemma}
In $V[G_\alpha]$ there is a diagonalization filter $\mathcal F' \supseteq \mathcal F_\alpha$ for $\mathcal I_{\alpha + 1}$. Consequently we can always choose the diagonalization filters for the iteration to be a $\subseteq$-strictly increasing sequence. \label{biggerfilters}
\end{lemma}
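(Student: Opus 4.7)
My plan is to prove both halves working in $V[G_{\alpha+1}]$, where $\mathcal I_{\alpha+1} = \mathcal I_\alpha \cup \{m_\alpha\}$ actually lives. The first step is to verify that $\mathcal F_\alpha$ itself, regarded as a filter in $V[G_{\alpha+1}]$, already has the defining property of a diagonalization filter for $\mathcal I_{\alpha+1}$: every $X \in \mathcal F_\alpha$ has infinite intersection with every Boolean combination of $\mathcal I_{\alpha+1}$. Boolean combinations of $\mathcal I_{\alpha+1}$ split into three cases according to the behaviour of $h' \in \mathsf{FF}(\mathcal I_{\alpha+1})$ at $m_\alpha$: writing $h = h' \upharpoonright \mathcal I_\alpha$, they are $\mathcal I_\alpha^h$ (if $m_\alpha \notin \mathrm{dom}(h')$), $\mathcal I_\alpha^h \cap m_\alpha$ (if $h'(m_\alpha) = 0$), and $\mathcal I_\alpha^h \cap (\omega \setminus m_\alpha)$ (if $h'(m_\alpha) = 1$). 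The first case is inherited from $\mathcal F_\alpha$ being a diagonalization filter for $\mathcal I_\alpha$. The second follows from $m_\alpha \subseteq^* X$, as the Mathias generic is a pseudointersection of its filter, together with the infinitude of $m_\alpha \cap \mathcal I_\alpha^h$, which holds by the independence of $\mathcal I_{\alpha+1}$. The third case requires a density argument in $\mathbb M(\mathcal F_\alpha)$: given $(s, A)$ and $N \in \omega$, choose $n \in A \cap X \cap \mathcal I_\alpha^h$ with $n > N$ (possible since $A \cap X \in \mathcal F_\alpha$ and hence $A \cap X \cap \mathcal I_\alpha^h$ is infinite), extend the stem $s$ to $s'$ of length $n+1$ by filling with zeros, and shrink $A$ to $A \setminus (n+1) \in \mathcal F_\alpha$; the resulting condition forces $n \in X \cap \mathcal I_\alpha^h$ and $n \notin m_\alpha$, so by density $X \cap \mathcal I_\alpha^h \setminus m_\alpha$ is infinite in $V[G_{\alpha+1}]$.

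Having shown $\mathcal F_\alpha$ retains the diagonalization property for $\mathcal I_{\alpha+1}$, Zorn's lemma applied to the collection of filters containing $\mathcal F_\alpha$ with this property produces a maximal such filter $\mathcal F'$, which is by definition a diagonalization filter for $\mathcal I_{\alpha+1}$. For the ``consequently'' clause I would exhibit, before applying Zorn, a specific $Y \in V[G_{\alpha+1}] \setminus V[G_\alpha]$ that can be added to $\mathcal F_\alpha$ consistently with the diagonalization property; the maximal extension containing $\mathcal F_\alpha \cup \{Y\}$ will then strictly contain $\mathcal F_\alpha$. A natural candidate is $Y = m_\alpha \cup Z$, where $Z \subseteq \omega \setminus m_\alpha$ is a proper infinite subset chosen (via a further density argument modeled on the one above) to meet every $X \cap \mathcal I_\alpha^h$ in an infinite set for $X \in \mathcal F_\alpha$ and $h \in \mathsf{FF}(\mathcal I_\alpha)$. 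The filter generated by $\mathcal F_\alpha \cup \{Y\}$ still satisfies the diagonalization property for $\mathcal I_{\alpha+1}$ by the same three-case analysis, and continuity at limit stages of the resulting sequence is handled by Lemma \ref{lemma1}.

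The step I expect to be most delicate is the construction of $Z$ (equivalently, showing that the maximal Zorn extension always strictly contains $\mathcal F_\alpha$) when $\mathcal F_\alpha$ has no countable base, since one must avoid finite intersection with $X \cap \mathcal I_\alpha^h$ for potentially uncountably many pairs $(X, h)$ while keeping $Z$ a proper subset of $\omega \setminus m_\alpha$. A cleaner abstract route would be to observe that any maximal diagonalization filter for $\mathcal I_{\alpha+1}$ built in $V[G_{\alpha+1}]$ must encode information about $m_\alpha \notin V[G_\alpha]$ and so cannot coincide with the ground-model object $\mathcal F_\alpha$; making this precise is the main substantive content of the lemma's second half.
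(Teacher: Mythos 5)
Your first half is essentially the paper's proof: show that every $X \in \mathcal F_\alpha$ retains infinite intersection with every Boolean combination of $\mathcal I_{\alpha+1}$ and then apply Zorn. The paper runs a single density argument handling $m_\alpha^0$ and $m_\alpha^1$ uniformly (given $(s,A)$, pick $k > n$ in $A \cap X \cap \mathcal I_\alpha^{h'}$ and extend the stem with $s'(k)=i$), whereas you split into three cases and dispatch the $h'(m_\alpha)=0$ case via $m_\alpha \subseteq^* X$; both are fine and the difference is cosmetic. Where you go astray is the ``consequently'' clause. Your plan to build $Y = m_\alpha \cup Z$ with $Z$ a coinfinite subset of $\omega \setminus m_\alpha$ meeting $X \cap \mathcal I_\alpha^h \setminus m_\alpha$ infinitely for all (possibly uncountably many) pairs $(X,h)$ is, as you suspect, not something a straightforward recursion delivers, and it is also unnecessary. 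Strictness is automatic: any diagonalization filter $\mathcal F'$ for $\mathcal I_{\alpha+1}$ contains ${\rm fil}(\mathcal I_{\alpha+1})$, and the latter contains sets such as $W = m_\alpha \cup A$ for $A \in \mathcal I_\alpha$ (extend any $h'$ by $\langle m_\alpha,0\rangle$ if $m_\alpha \notin {\rm dom}(h')$ or $h'(m_\alpha)=0$, and by $\langle A,0\rangle$ if $h'(m_\alpha)=1$, to get a Boolean combination almost contained in $W$). Since $m_\alpha \setminus A$ is infinite by independence of $\mathcal I_{\alpha+1}$, $W$ computes $m_\alpha$ from $A$ and so $W \notin V[G_\alpha] \supseteq \mathcal F_\alpha$; hence $\mathcal F' \supsetneq \mathcal F_\alpha$ for free. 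Replace your $Z$-construction with this observation and the argument is complete.
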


\begin{proof}
Work in $V[G_\alpha]$ and fix $X \in \mathcal F_\alpha$. It suffices to show that, under the hypotheses we have that $\forces_{\mathbb M(\mathcal F_\alpha)}$ ``$\check{X}$ has infinite intersection with every $h \in \mathsf{FF}(\dot{\mathcal I}_{\alpha + 1})$" as in this case every $X \in \mathcal F_\alpha$ has infinite intersection with every Boolean combination of $\mathcal I_{\alpha + 1}$ (note that we are assuming the maximal condition forces this) so we can extend $\mathcal F_\alpha$ to some diagonalization filter for $\mathcal I_{\alpha+1}$. Fix an arbitrary condition $(s, A)$ and an arbitrary $h \in \mathsf{FF}(\mathcal I_{\alpha + 1})$. If $m_{\alpha} \notin {\rm dom}(h)$ then by hypothesis we have $X \cap \mathcal I_{\alpha+1}^h$ is infinite so assume $m_{\alpha} \in {\rm dom}(h)$ and let $h' = h \setminus \{\langle m_{\alpha}, h(m_\alpha)\rangle \}$. We need to show that for each $n < \omega$ and each $i < 2$ we have that there is a $k > n$ with $k \in \mathcal I_{\alpha}^{h'} \cap X \cap m_\alpha^i$. Fix $n < \omega$, and $i < 2$. Without loss we can assume that ${\rm dom}(s) \supsetneq n$ and $A \subseteq X$ as the set of conditions whose stem has domain containing $n$ is dense and, since $X \in \mathcal F_\alpha$ we can always replace $A$ by $A \cap X$ if we so choose. Note that again we have that $A \cap \mathcal I^{h'}_\alpha$ has infinite intersection, and therefore there is in particular a $k \in A \cap \mathcal I_{\alpha}^{h'}$ and $k > n$ since $(s, A)$ is a condition and therefore ${\rm min}(A) > n$. Let $s' \supseteq s$ so that $s'(k) = i$. Then $(s', A \setminus {\rm dom}(s'))$ is a condition extending $(s, A)$ which forces that $k \in \mathcal I_{\alpha}^{h'} \cap A \cap m_\alpha^i \subseteq \mathcal I_{\alpha}^{h'} \cap X \cap m_\alpha^i$, so we are done.
\end{proof}

\begin{lemma}
    If $\gamma < \kappa$ and $f \in \baire \cap V[G_\gamma]$ is strictly increasing, then there is a choice of diagonalization filters $\dot{\mathcal F}_{\gamma + i}$ for $i \in \omega + \omega$ so that (using this choice of filters for the Mathias forcing notions) we have that in $V[G_{\gamma + \omega + \omega}]$ there is an $A \in {\rm fil}(\mathcal I_{\gamma + \omega + \omega})$ which $Q$-dominates $f$. \label{Qdominate}
\end{lemma}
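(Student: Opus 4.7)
The plan is to use the first $\omega$ of the $\omega+\omega$ stages to force every Mathias real $m_{\gamma+n}$ (for $n<\omega$) to be almost-contained in a single fixed $Q$-dominator $C$ for $f$ already living in $V[G_\gamma]$; the remaining $\omega$ stages play no special role beyond what Lemma \ref{biggerfilters} already supplies. The witness set will be $A := C$ itself.

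First I would construct $C \in V[G_\gamma]$ to be an infinite $Q$-dominator for $f$ (so its increasing enumeration $\{c_i\}_{i<\omega}$ satisfies $f(c_i)<c_{i+1}$) that is compatible with $\mathcal I_\gamma$, meaning $|C \cap \mathcal I_\gamma^h|=\infty$ for every $h \in \mathsf{FF}(\mathcal I_\gamma)$. In the setup of the main theorem, the hypothesis $\kappa^{{<}\kappa}=\kappa$ (or, failing that, passage to a countable elementary submodel of a large enough $H(\theta)$) makes the relevant portion of $\mathsf{FF}(\mathcal I_\gamma)$ effectively countable; enumerate it as $\{h_i : i<\omega\}$ with each $h$ appearing infinitely often, and recursively pick $c_i \in \mathcal I_\gamma^{h_i} \cap (f(c_{i-1}), \infty)$, which is possible because each Boolean combination is infinite. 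Then choose $\mathcal F_\gamma$, via Zorn's Lemma, to be a diagonalization filter for $\mathcal I_\gamma$ containing $C$. At each successor stage $\gamma+n+1$ (for $n<\omega$), take $\mathcal F_{\gamma+n+1}$ to be any diagonalization filter for $\mathcal I_{\gamma+n+1}$ extending $\mathcal F_{\gamma+n}$; this exists by Lemma \ref{biggerfilters}, and since $C \in \mathcal F_{\gamma+n}$, automatically $C \in \mathcal F_{\gamma+n+1}$. Consequently $m_{\gamma+n} \subseteq^* C$ for every $n<\omega$. At stages $\gamma+\omega+n$ for $n<\omega$, any diagonalization filters extending the previous one (again via Lemma \ref{biggerfilters}) suffice.

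Setting $A := C$, the set $A$ is a $Q$-dominator for $f$ by construction, and with $\xi_n := \gamma+n$ a strictly increasing sequence of ordinals we have $m_{\xi_n} \subseteq^* A$ for all $n<\omega$. Applying Lemma \ref{basis}, implication $(2)\Rightarrow(3)$, directly with $\kappa$ replaced by $\gamma+\omega+\omega$ (the argument of that implication works verbatim, needing only finitely many free indices $\xi_n \notin \mathrm{dom}(h)$ for any $h \in \mathsf{FF}(\mathcal I_{\gamma+\omega+\omega})$), we obtain $A \in \mathrm{fil}(\mathcal I_{\gamma+\omega+\omega})$, completing the proof. The main obstacle is arranging the compatible construction of $C$ when $\mathcal I_\gamma$ is genuinely uncountable; this is where the bookkeeping and submodel devices enter, ensuring the Boolean combinations of $\mathcal I_\gamma$ against which $C$ must diagonalize can be exhausted in order-type $\omega$.
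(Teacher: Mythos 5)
Your proof has a genuine gap at its very first step: the construction of the $Q$-dominator $C$ inside $V[G_\gamma]$. You need $C$ to meet every Boolean combination $\mathcal I_\gamma^h$ infinitely, but $\mathcal I_\gamma$ is in general uncountable ($\mathcal I_0$ is an arbitrary independent family and $\gamma$ ranges up to an ordinal of uncountable cofinality), so $\mathsf{FF}(\mathcal I_\gamma)$ is uncountable and cannot be enumerated in order type $\omega$ with each $h$ recurring infinitely often. Neither of your proposed repairs works. The hypothesis $\kappa^{{<}\kappa}=\kappa$ counts nice names for the global bookkeeping in Lemma \ref{qfilter}; it does not shrink $\mathsf{FF}(\mathcal I_\gamma)$ to something countable, and in any case the lemma is stated without that hypothesis. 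The countable elementary submodel device also fails: if you diagonalize only against $h\in M\cap\mathsf{FF}(\mathcal I_\gamma)$, the resulting $C$ is not an element of $M$, so elementarity cannot be invoked to upgrade ``$C$ meets $\mathcal I_\gamma^h$ for all $h\in M$'' to the same statement for all $h$; concretely, for $A\in\mathcal I_\gamma\setminus M$ the set $C$ may well be almost disjoint from $A$ while still meeting every Boolean combination over $\mathcal I_\gamma\cap M$. Indeed, for a densely maximal family the sets meeting every Boolean combination are exactly the density filter (by the partition property), so the existence of a ground-model $Q$-dominator meeting all Boolean combinations is essentially the conclusion one is trying to force, not something available for free.

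The paper avoids this by not building the witness in $V[G_\gamma]$ at all: $A$ is assembled from the minimums of the \emph{new} Mathias reals $m_{\gamma+n}$ added during the first $\omega$ stages (choosing $k(n+1)=\min(m_{\gamma+n_{n+1}})$ with $\min(m_{\gamma+n_{n+1}})>f(k(n))$), and the fact that $A$ meets every Boolean combination of $\mathcal I_{\gamma+\omega}$ is a genericity argument: each of the uncountably many $h$ has finite domain appearing by some stage $\gamma+n$, and a density argument below that stage steers the minimum of a later Mathias real into $\mathcal I^h\setminus k$. This is why the second block of $\omega$ stages is genuinely needed — $A$ only exists in $V[G_{\gamma+\omega}]$, and only then can it be inserted into the diagonalization filters via Lemma \ref{biggerfilters}. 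The back end of your argument (putting the witness into all subsequent filters and invoking Lemma \ref{basis}) is fine and matches the paper; the failure is localized entirely in the claim that a suitable $C$ already exists in $V[G_\gamma]$. (Your argument would be correct in the special case that $\mathcal I_\gamma$ is countable.)
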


\begin{proof}
Fix $\gamma$ and $f$ as in the hypothesis of the lemma. Observe by the finiteness of the support, a density argument ensures that for every $k < \omega$ there is an $n < \omega$ so that ${\rm min}(m_{\gamma + n}) > k$. Using this, inductively define $A = \{k(n)\}_{n <\omega}$ so that $k(n+1) = {\rm min}(m_{\gamma + n_{n+1}})$ where $n_{n+1}$ is the least number $l$ so that ${\rm min} (m_{\gamma + l}) > f(k(n))$. Clearly $A$, which is in $V[G_{\gamma + \omega}]$, $Q$-dominates $f$ so it remains to show that the next $\omega$-many diagonalization filters can be chosen so that $A$ is forced to be in ${\rm fil}(\mathcal I_{\gamma + \omega + \omega})$.

Work in $V[G_{\gamma + \omega}]$. 

\begin{claim}
    $A$ has infinite intersection with every Boolean combination in $\mathcal I_{\gamma + \omega}$. \label{claim1}
\end{claim}

\begin{proof}[Proof of Claim \ref{claim1}]
    This is a density argument. Suppose that $k < \omega$, and $h \in \mathsf{FF}(\mathcal I_{\gamma + \omega})$. Let $n$ be large enough that ${\rm dom}(h) \subseteq \mathcal I_{\gamma + n}$ and let $m$ be such that the first $m$-entries of $A$ are the minimums of elements from among $m_{\gamma + l}$ for $l < n$ and denote these elements $\{k(j)\}_{j < m}$. Without loss of generality we can assume that $m > k$. Work in $V[G_{\gamma + n}]$. We now let $a > f(k(m-1))$ be in $\mathcal I_{\gamma + n}^h$. Since this set is infinite such an $a < \omega$ exists. Finally let $s \in 2^{<\omega}$ be the sequence of length $a + 1$ so that for all $b < a$ we have $s(b) = 0$ and $s(a) = 1$. Clearly, regardless of the choice of $\mathcal F_{\gamma + n}$ we have that $(s, \omega) \in \mathbb M(\mathcal F_{\gamma + n})$ and forces that the $m^{\rm th}$-element of $A$ is in $\mathcal I_{\gamma + n}^h\setminus k$. Since $k$, $h$ and $n$ were arbitrary, the proof is complete.
\end{proof}

Given Claim \ref{claim1}, observe that we can put $A$ into $\mathcal F_{\xi}$ for $\xi \in [\gamma + \omega, \gamma + \omega + \omega)$. The first step i.e. putting $A$ in $\mathcal F_{\gamma + \omega}$ follows from the claim since $A$ has infinite intersection with every Boolean combination. The following steps follow from Lemma \ref{biggerfilters}. Therefore $A \in {\rm fil}(\mathcal I_{\gamma + \omega+\omega})$ and hence $A \in {\rm fil}(\mathcal I_\kappa)$ by Lemma \ref{basis}, thus completing the proof.
\end{proof}

Now we can prove the following lemma which implies Theorem \ref{mainthmbetter}.
\begin{lemma}
If $\kappa^{{<}\kappa} = \kappa$ is a cardinal then there is a choice of diagonalization filters so that $\P_\kappa$ forces that ${\rm fil}(\dot{\mathcal I}_\kappa)$ is a $Q$-filter. \label{qfilter}
\end{lemma}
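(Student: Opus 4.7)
The plan is to combine Lemma \ref{Qdominate} with a standard bookkeeping argument exhausting all potential strictly increasing reals in $V[G_\kappa]$. Since each iterand $\mathbb M(\dot{\mathcal F}_\alpha)$ is $\sigma$-centered, $\P_\kappa$ is ccc, so every real in $V[G_\kappa]$ appears at some stage $\gamma<\kappa$. The hypothesis $\kappa^{{<}\kappa}=\kappa$ implies by a routine induction that $|\P_\alpha|\leq\kappa$ for all $\alpha\leq\kappa$, so the collection of nice $\P_\alpha$-names for elements of $\baire$ has cardinality at most $\kappa$.

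First I would fix, in the ground model, an enumeration $\langle (\gamma_\eta,\dot f_\eta):\eta<\kappa\rangle$ of all pairs $(\gamma,\dot f)$ with $\gamma<\kappa$ and $\dot f$ a nice $\P_\gamma$-name for a strictly increasing element of $\baire$, arranging that $\gamma_\eta\leq(\omega+\omega)\cdot\eta$ and that every such pair appears cofinally often; both requirements are standard given $\kappa^{{<}\kappa}=\kappa$. Since $\kappa$ is a cardinal of uncountable cofinality, the blocks $B_\eta=[(\omega+\omega)\cdot\eta,(\omega+\omega)\cdot(\eta+1))$ partition $\kappa$.

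At the $\eta$-th block I would apply Lemma \ref{Qdominate}, shifting its parameter ``$\gamma$'' to the start of $B_\eta$: this is legitimate since the strictly increasing real $f_\eta=(\dot f_\eta)^{G_{\gamma_\eta}}$ lies in $V[G_{\gamma_\eta}]\subseteq V[G_{(\omega+\omega)\cdot\eta}]$, and inspection of the proof of that lemma shows it only uses that $f$ is present in the model at the base of the $\omega+\omega$-long sub-iteration whose filters it prescribes. The lemma then dictates the choice of $\mathcal F_\xi$ for $\xi\in B_\eta$ and produces an $A_\eta\in{\rm fil}(\mathcal I_{(\omega+\omega)\cdot(\eta+1)})\subseteq{\rm fil}(\mathcal I_\kappa)$ that $Q$-dominates $f_\eta$. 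Assembling these choices block by block yields the full sequence of diagonalization filters.

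Finally, given any strictly increasing $f\in\baire\cap V[G_\kappa]$, the ccc provides some $\gamma<\kappa$ and a nice $\P_\gamma$-name $\dot f$ for $f$, and the cofinality of the enumeration yields $\eta$ with $(\gamma_\eta,\dot f_\eta)=(\gamma,\dot f)$; the corresponding $A_\eta\in{\rm fil}(\mathcal I_\kappa)$ $Q$-dominates $f$, verifying the $Q$-filter characterization. The main obstacle is the essentially bureaucratic one of running the bookkeeping: one must ensure the enumeration can be set up in the ground model and that every nice name is assigned to a block starting after its base stage. Both points are resolved by the $|\P_\alpha|\leq\kappa$ bound, which is precisely where the assumption $\kappa^{{<}\kappa}=\kappa$ is used.
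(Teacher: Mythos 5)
Your proof is correct and takes essentially the same route as the paper, which simply invokes Lemma \ref{Qdominate} together with the counting of nice names afforded by $\kappa^{{<}\kappa}=\kappa$ and the ccc, leaving the bookkeeping implicit; you have merely spelled that bookkeeping out (including the correct observation that Lemma \ref{Qdominate} applies with its base stage shifted past the stage where $f$ appears). The only cosmetic caveat is that the enumeration of pairs $(\gamma,\dot f_\eta)$ must be built by simultaneous recursion with the iteration, since $\P_\gamma$ depends on the filter choices made in earlier blocks; this is the standard device and does not affect correctness.
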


\begin{proof}
We want to show that we can choose the diagonalization filters so that every strictly increasing $f \in \baire \cap V[G_\kappa]$ is $Q$-dominated by some $A \in {\rm fil}(\mathcal I_\kappa)$. By Lemma \ref{Qdominate} we can ensure for any {\em fixed} $f \in \baire \cap V[G_\kappa]$ this can be done but then the cardinal arithmetic hypothesis, alongside the ccc of the forcing ensures that there is enough space to handle every $f$ with some bookkeeping as there are only $\kappa$-many nice names for reals. 
\end{proof}

As stated before the combination of Lemmas \ref{dense}, \ref{pfilter} and \ref{qfilter} prove Theorem \ref{mainthmbetter} (and hence Theorem \ref{mainthm}).

\section{Arbitrarily large selective independent families}

Since Mathias forcing notions relativized to a filter are all $\sigma$-centered, by \cite[Theorem 7.12]{BlassHB}, we get the following, which strengthens a theorem of Shelah from \cite{Sh92}, who proved the same under the stronger hypothesis of $\CH$ in place of $\mfp = \cc$.
\begin{theorem}
    Assume $\mfp = \cc$. Every independent family $\mathcal I_0$ of size ${<}\cc$ can be extended to a selective independent family. \label{p=ccor}
\end{theorem}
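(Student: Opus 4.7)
The plan is to mimic the finite-support iteration of Theorem \ref{mainthmbetter} inside the ground model, replacing the forcing by Bell's theorem. Under $\mfp = \cc$ one has $\MA_{\sigma\text{-centered}}(\kappa)$ for every $\kappa < \cc$. For any filter $\mathcal F$ on $\omega$, the Mathias forcing $\mathbb M(\mathcal F)$ is $\sigma$-centered---two conditions $(s, A)$ and $(s, B)$ sharing a stem are compatible via $(s, A \cap B)$, and there are only countably many stems---so each single step of the iteration can be simulated inside $V$ by meeting $<\cc$ dense sets.

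I would then recursively build a $\subseteq$-increasing chain $\langle \mathcal I_\alpha : \alpha < \cc\rangle$ starting from $\mathcal I_0$, along with filters $\mathcal F_\alpha$ having a base of cardinality $<\cc$ such that every element of $\mathcal F_\alpha$ has infinite intersection with every Boolean combination of $\mathcal I_\alpha$. At a successor stage $\alpha$, let $m_\alpha$ be obtained by applying $\MA_{\sigma\text{-centered}}$ to $\mathbb M(\mathcal F_\alpha)$, meeting the dense sets that guarantee (i) $m_\alpha \subseteq^* A$ for each $A$ in a chosen base of $\mathcal F_\alpha$; (ii) the independence of $\mathcal I_\alpha \cup \{m_\alpha\}$, via one dense set per pair $(h, n)$ with $h \in \mathsf{FF}(\mathcal I_\alpha)$ and $n < \omega$; and (iii) the non-independence of $\mathcal I_\alpha \cup \{m_\alpha, y\}$ for every $y$ previously enumerated. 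These are $<\cc$ in number since $|\mathcal I_\alpha|, |\mathcal F_\alpha|, |\alpha| < \cc$ throughout. Put $\mathcal I_{\alpha+1} = \mathcal I_\alpha \cup \{m_\alpha\}$ and extend $\mathcal F_\alpha \cup \{m_\alpha\}$ to a filter $\mathcal F_{\alpha+1}$ compatible with all Boolean combinations of $\mathcal I_{\alpha+1}$; the existence of such an extension is the (purely combinatorial) content of Lemma \ref{biggerfilters}. At limit stages, take unions.

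Standard $\cc$-bookkeeping interleaves three families of tasks. For dense maximality, we address each $X \in [\omega]^\omega$ by the dichotomy used in the proof of Lemma \ref{dense}: either $X$ is compatible with some extension of $\mathcal F_\alpha$ (so will enter $\mathrm{fil}(\mathcal I)$), or some $h \in \mathsf{FF}(\mathcal I_\alpha)$ already witnesses that $X$ cannot be added. For the $P$-filter property, enumerate all countable subfamilies of $[\omega]^\omega$ and at each corresponding block of $\omega$ successor stages use the diagonal construction of Lemma \ref{pfilter} to produce a pseudo-intersection in $\mathrm{fil}(\mathcal I)$ whenever the given family lies in $\mathrm{fil}(\mathcal I)$. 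For the $Q$-filter property, enumerate all strictly increasing $f \in \baire$ and at each corresponding $\omega$-block of stages choose $\mathcal F_\alpha$'s exactly as in the proof of Lemma \ref{Qdominate} to manufacture an $A \in \mathrm{fil}(\mathcal I)$ that $Q$-dominates $f$. The verifications that $\mathcal I = \bigcup_{\alpha < \cc} \mathcal I_\alpha$ is densely maximal with Ramsey density filter are then literally those of Lemmas \ref{dense}, \ref{pfilter}, \ref{qfilter}, since each $m_\alpha$ behaves, with respect to $\mathcal I_\alpha$ and $\mathcal F_\alpha$, exactly as a generic Mathias real would.

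The main difficulty is organizational rather than conceptual: one must schedule each task at a stage where the relevant objects are present and where the already-committed structure of $\mathcal F_\alpha$ is compatible with the action to be taken, while maintaining $|\mathcal F_\alpha| < \cc$ throughout. Since every task requires at most $\omega$ successor stages to resolve and there are only $\cc$-many tasks and $\cc$-many stages, a standard interleaving suffices.
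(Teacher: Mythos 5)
Your proposal is correct and follows essentially the same route as the paper's proof: both replace the Mathias iteration of Theorem \ref{mainthmbetter} by an internal transfinite construction of length $\cc$ driven by Bell's theorem ($\mfp=\cc$ gives $\MA$ for $\sigma$-centered posets applied to the Mathias forcings of the diagonalization filters), with bookkeeping over the same three tasks and verifications borrowed from Lemmas \ref{dense}, \ref{pfilter} and \ref{Qdominate}. The only (harmless) deviations are that the paper works with genuine maximal diagonalization filters and a dominating scale rather than small-based filters and an enumeration of all increasing functions, and that your clause (iii) killing previously enumerated $y$'s is redundant, since maximality already follows from the dense maximality bookkeeping.
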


\begin{proof}
Fix $\mathcal I_0$, an independent family of some size $\lambda < \cc$. Enumerate the elements of $\mathcal I_0$ as $\{A_\xi \; | \; \xi < \lambda\}$. For $\mathcal I_0$, and further independent families we will build in this proof, we associate a finite partial function $h:\cc \to 2$ to a Boolean combination by mapping e.g. $A_\alpha$ to $A_\alpha^{h(\alpha)}$. We will not comment on this again and assume implicitly that some enumeration of our independent families has been chosen to make sense of this. If there is a $\zeta \in {\rm dom}(h)$ which is greater than $\lambda$ then we consider the Boolean combination undefined. Enumerate all pairs consisting of an element of Ramsey space and a finite partial function $h:\cc \to 2$ as $\{(X_\alpha, h_\alpha) \; | \; \alpha < \cc\}$, enumerate countable subsets of Ramsey space $\{(A^\alpha_n) \; | \; n < \omega, \, \alpha < \cc\}$ so that every sequence appears unboundedly often and fix a scale $\{f_\alpha \; | \; \alpha < \cc\} \subseteq \baire$ (so $\alpha < \beta$ implies $f_\alpha \leq^* f_\beta$ and this family is dominating). Note that the assumption on $\mfp$ guarantees such a scale exists. We will inductively define a continuous, $\subseteq$-increasing sequence of independent families $\{\mathcal I_\alpha\}_{\alpha < \cc}$ so that the union $\bigcup_{\alpha < \cc} \mathcal I_\alpha$ is selective. Indeed it suffices to show that given $\mathcal I_\alpha$ independent we can find $\mathcal I_{\alpha + 1} \supseteq \mathcal I_\alpha$ so that the following hold:

   \begin{enumerate}
       \item If $X_\alpha$ has infinite intersection with every Boolean combination of $\mathcal I_\alpha$ and $h_\alpha$ is defined on $\mathcal I_\alpha$ then there is an $h' \supseteq h_\alpha$ so that $\mathcal I_\alpha^{h'} \setminus X_\alpha$ is finite.
       \item If $(A_n^\alpha) \subseteq {\rm fil}(\mathcal I_\alpha)$ then there is a $B \in {\rm fil}(\mathcal  I_{\alpha + 1})$ so that for all $n < \omega$ we have $B \subseteq^* A_n^\alpha$.
       \item There is a $C \in {\rm fil}(\mathcal I_{\alpha+1})$ which $Q$-dominates $f_\alpha$.
   \end{enumerate}

   The rest of the proof is standard bookkeeping argument. So fix $\alpha < \cc$. At each stage we will add at most countably many reals so we can assume that $\mathcal I_\alpha$ has size ${<} \cc$. We will deal with the three requirements in three steps. In the first step, if $X_\alpha$ does not have infinite intersection with every Boolean combination of $\mathcal I_\alpha$ or $h_\alpha$ is not defined on $\mathcal I_\alpha$ then we do nothing and let $\mathcal I_\alpha^0 = \mathcal I_\alpha$. Otherwise we use the forcing axiom characterization of $\mfp = \cc$ applied to $\mathbb M (\mathcal F_\alpha)$ where $\mathcal F_\alpha$ is a diagonalization filter containing $X_\alpha$. By ${<}\cc$-many dense sets we can find a $Y$ which is independent over $\mathcal I_\alpha$ and $Y \subseteq^* X$ since there are ${<}\cc$-many Boolean combinations. Let $\mathcal I^0_\alpha = \mathcal I_\alpha \cup \{Y\}$ and note that by the same argument as in the proof of Lemma \ref{dense} this satisfies criterion (1) above.

   Next, if $(A_n^\alpha) \nsubseteq {\rm fil}(\mathcal I_\alpha)$, let $\mathcal I^1_\alpha = \mathcal I^0_\alpha$. Otherwise, by successively choosing diagonalization filters (which will all have all the $A_n^\alpha$'s) we can find countably many sets $(Y_n)_{n < \omega}$ so that $\mathcal I^1_\alpha := \mathcal I^0_\alpha \cup \{Y_n\; | \; n < \omega\}$ is independent and each $Y_n$ is an almost subset of every $A^\alpha_n$. As in the proof of Lemma \ref{pfilter}, for each $k$ let $l_k(n)$ be such that $Y_n \setminus l_k(n) \subseteq A^\alpha_n$ and let $f\in \baire$ dominate all the $l_k$'s. The same proof as in Lemma \ref{pfilter} ensures then that $B = \bigcup_{n < \omega} (Y_n \setminus f(n))$ is in ${\rm fil}(\mathcal I^1_\alpha)$ and $B \subseteq^* A^\alpha_n$ for all $n < \omega$ as needed for criterion (2).

   Finally for criterion (3), we can again use the forcing axiom characterization of $\mfp = \cc$ applied in this case to mimic the proof of Lemma \ref{Qdominate} to find a $Z$ which $Q$-dominates $f_\alpha$ and has infinite intersection with every Boolean combination in $\mathcal I_\alpha^1$. Finally similar to the proof of criterion (2) in the previous paragraph we can find sets $\{Z_n\; | \; n < \omega\}$ so that $\mathcal I_{\alpha + 1} : = \mathcal I^2_\alpha = \mathcal I^1_\alpha \cup \{Z_n\; | \; n < \omega\}$ is independent and $Z_n \subseteq^* Z$ for all $n < \omega$. By the same proof again as in Lemma \ref{Qdominate}, this ensures that $Z \in {\rm fil}(\mathcal I_{\alpha + 1})$, thus completing the construction and hence the proof. 
\end{proof}

We also get the following. 

\begin{theorem}
    Let $\kappa < \lambda$ be cardinals both of uncountable cofinality. It is consistent that $\cc = \lambda$ and there is a selective independent family of size $\kappa$. Moreover if $\kappa$ is regular we can have that $\mfi = \kappa$ i.e. the selective independent family is of minimal size. \label{i<c}
\end{theorem}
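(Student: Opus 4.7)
The plan is a two-stage forcing construction, starting from a ground model $V$ of $\GCH$ (which in particular gives $\kappa^{{<}\kappa} = \kappa$ for regular $\kappa$; the singular case requires a preliminary arithmetic adjustment). In the first stage, I apply Theorem \ref{mainthmbetter} with the finite-support Mathias iteration $\P_\kappa$ of length $\kappa$; by choosing the diagonalization filters via the bookkeeping already described, I obtain an intermediate model $V_1 = V[G_\kappa]$ containing a selective m.i.f. $\mathcal I_\kappa$ of size $\kappa$. The ccc-ness and size of $\P_\kappa$, combined with $\GCH$ in $V$, yield $V_1 \models \cc = \kappa$.

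If $\kappa = \lambda$, then $V_1$ already witnesses the conclusion. Otherwise, in the second stage I force over $V_1$ with the countable support product of $\lambda$-many copies of Sacks forcing. By Fact \ref{fact.all}, $\mathcal I_\kappa$ remains selective (hence maximal) in the resulting model $V_2$. Since $V_1$ inherits $\lambda^{\aleph_0} = \lambda$ from $V$ (via a standard nice-names computation using the ccc of $\P_\kappa$) and since the Sacks product of length $\lambda$ forces the continuum to $\lambda$ under this arithmetic, we have $V_2 \models \cc = \lambda$. This completes the first part of the theorem.

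For the ``moreover'' clause, assume $\kappa$ is regular; I need $V_2 \models \mfi = \kappa$. The upper bound is witnessed by $\mathcal I_\kappa$, so the content is in the lower bound. For this I would augment the Stage 1 bookkeeping: enumerate in type $\kappa$ all $\P_\alpha$-names (for $\alpha < \kappa$) for independent families of size ${<}\kappa$, and at each stage $\alpha$ choose $\mathcal F_\alpha$ so that the Mathias real $m_\alpha$ is forced to be independent over the currently listed candidate $\mathcal J$, thereby destroying its maximality. The pigeonhole point is that a size-$\mu$ candidate has only $\mu < \kappa$ Boolean combinations, so if $\mathcal J$ were to remain a m.i.f. throughout, some fixed $\mathcal J^h$ would have to witness failure of independence of cofinally many Mathias reals; arranging via the bookkeeping that $\mathcal J^h$ keeps having infinite intersection with every element of the relevant $\mathcal F_\beta$ forces $m_\beta \cap \mathcal J^h$ to be generically infinite and blocks this. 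Preservation of $\mfi \geq \kappa$ through Stage 2 should then follow from the properness, $\omega^\omega$-boundedness, and selectivity-preservation of the Sacks product.

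The main obstacle I anticipate is the interleaving of the bookkeeping tasks in Stage 1: each $\mathcal F_\alpha$ must simultaneously (i) support the $Q$-filter argument of Lemma \ref{Qdominate}, (ii) admit the extensions to larger diagonalization filters from Lemma \ref{biggerfilters} and (iii) for the moreover clause, witness non-maximality of the currently listed small candidate $\mathcal J$. Showing these three demands are jointly realizable with enough flexibility---specifically, that at stage $\alpha$ one can always extend to a diagonalization filter in which every element retains infinite intersection with each Boolean combination $\mathcal J^h$ of a previously-enumerated candidate---is the central technical challenge, and the cardinal arithmetic $\kappa^{{<}\kappa} = \kappa$ is what provides the scheduling room. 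A secondary concern is verifying that the Sacks product introduces no new m.i.f.\ of size ${<}\kappa$ in $V_2$; this should follow from the preservation properties already invoked, but may warrant a separate argument tailored to the density filter of $\mathcal I_\kappa$.
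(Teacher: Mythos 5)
Your two-stage plan diverges from the paper's proof and has genuine gaps. The fatal one is Stage 2: for $\kappa>\aleph_1$, the countable support product of $\lambda$ many copies of Sacks forcing over a model $V_1$ with $(2^{\aleph_0})^{V_1}=\kappa>\aleph_1$ collapses $(2^{\aleph_0})^{V_1}$ to $\aleph_1$ (given any real $x\in V_1$ and any condition $p$, extend $p$ on a fresh block of $\omega$ coordinates so that the generic codes $x$ there; genericity then enumerates $(2^\omega)^{V_1}$ in order type $\omega_1$). Fact \ref{fact.all} is a $\CH$-context preservation result and cannot be invoked over $V_1$. The paper avoids this by reversing the order: it first forces $\cc=\lambda$ and only then runs the ccc iteration of length $\kappa<\lambda$, which leaves the continuum at $\lambda$. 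That reversal creates a problem your plan never confronts: once $\cc=\lambda>\kappa$, the bookkeeping of Lemma \ref{qfilter} has no room to $Q$-dominate all $\lambda$ many reals in only $\kappa$ steps. The paper's key additional idea is to interleave Hechler forcing at a cofinal set of stages of order type $\mu={\rm cf}(\kappa)$ and to apply Lemma \ref{Qdominate} only to the Hechler generics; since these form a dominating family, $Q$-dominating them suffices for ${\rm fil}(\mathcal I_\kappa)$ to be a $Q$-filter. Relatedly, your Stage 1 cannot reach singular $\kappa$ of uncountable cofinality at all: $\kappa^{<\kappa}=\kappa$ is outright false for singular $\kappa$ by K\"onig's theorem, so this is not a ``preliminary arithmetic adjustment'' but a case your method excludes, whereas the Hechler device needs only a cofinal sequence of length $\mu$.

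The ``moreover'' clause is also over-engineered and under-justified as written: you do not show that a single Mathias real can simultaneously diagonalize $\mathcal F_\alpha$ and be independent over an arbitrary small candidate $\mathcal J$, nor that the Sacks stage adds no new small m.i.f. The paper gets the lower bound for free: the interleaved Hechler reals form a scale, so $\mfb=\mfd=\mu$, and since $\mfd\le\mfi$ holds in $\ZFC$, regularity of $\kappa$ (so $\mu=\kappa$) gives $\mfi\ge\kappa$, while $\mathcal I_\kappa$ witnesses $\mfi\le\kappa$. No destruction of candidate families is needed.
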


\begin{proof}
By forcing if necessary assume that $\cc = \lambda$. Let $\mu = {\rm cf}(\kappa)$ and let $\{i_\alpha \; | \; \alpha < \mu\}$ be a $\mu$-length cofinal sequence. We will force that $\mfb = \mfd = \mu$ and there is a selective independent family of size $\kappa$. Since $\mfd \leq \mfi$ in $\ZFC$, see \cite[Theorem 8.13]{BlassHB}, in the case $\kappa$ is regular this will complete the proof of the ``moreover" part as well. Now, define a finite support iteration $\langle \P_\alpha, \dot{\Q}_\alpha\; | \;\alpha < \kappa\rangle$ so that if $\alpha \notin \{i_\xi \; | \; \xi < \mu\}$ then $\P_\alpha$ forces that $\dot{\Q}_\alpha$ is the Mathias forcing for some inductively defined independent family as in the construction described in Theorem \ref{mainthmbetter}. If $\alpha$ is some $i_\xi$ then let $\mathbb P_\alpha$ force that $\dot{\Q}_\alpha$ is Hechler forcing, followed by the $\omega + \omega$ stage iteration described in Lemma \ref{Qdominate} to make the Hechler Generic $Q$-dominated by some element of the filter of the family we are adding.

Let $\mathcal I_\kappa$ be the generic independent family added by this iteration. By the arguments in the previous section, it is clear that this family will be densely maximal and have a density filter which is a $P$-filter. Moreover, the Hechler reals will form a scale of length $\mu$ (and every set of reals of size ${<}\mu$ will be dominated by some Hechler real) hence $\mfb = \mfd = \mu$. Also, each Hechler real will be $Q$-dominated by some element of ${\rm fil}(\mathcal I_\kappa)$. Since the family of Hechler reals is dominating this is enough to ensure that ${\rm fil}(\mathcal I_\kappa)$ is a $Q$-filter thus completing the proof. 
\end{proof}

\section{Conclusion and Open Questions}

We conclude this paper with a list of questions for further research. %There still remain many unanswered questions about selective independent families. 
The most important of these, as mentioned in the introduction is the following.

\begin{question}
    Is there always a selective independent family? If there is one, is there always one of size $\mfi$? \label{existence}
\end{question}

Towards answering this question we note that very little is even known about the existence of selective independent families in models where the ground model selective independent families are not preserved. Indeed, until the current paper we did not know if $\mfi = \cc>\aleph_1$ was consistent with the existence of a selective independent family. In particular we would like to know:
%That the following simple test question is open proves our ignorance in even beginning to solve Question \ref{existence}.
\begin{question}
    Are there selective independent families in the Cohen model?
\end{question}

Turning our attention to the results of this paper point out to the following loose end from the proof of Theorem \ref{mainthmbetter}: Did we need to choose the diagonalization filters to ensure the $Q$-filter property? More precisely of interest is the following:

\begin{question}
    Can an iteration of Mathias forcing as described above produce an independent family whose diagonalization filter is not a $Q$-filter?
\end{question}

%Finally we remark that in the proof of Theorem \ref{i<c} the use of Hechler reals seems essential. Therefore the following seems like it would be an important line of inquiery to understand the $Q$-filter property better.
%\begin{question}
%    Suppose $\mathcal I$ is selective, is ${\rm cf}(|\mathcal I|) \geq \mathfrak b$?
%\end{question}
%
%Of course this relates to the well known question of whether it is consistent that $\mfi$ %has countable cofinality.

\end{document}